\documentclass[a4paper,11pt]{amsart}
\usepackage{wrapfig}
\usepackage[dvips]{graphicx}
\usepackage{amsmath,amsthm,amssymb}
\theoremstyle{definition}
\newtheorem{thm}{Theorem}[section]
\newtheorem{Def}[thm]{Definition}
\newtheorem{pro}[thm]{Proposition}
\newtheorem{cor}[thm]{Corollary}
\newtheorem{lem}[thm]{Lemma}
\newtheorem{ex}[thm]{Example}
\newtheorem{rem}[thm]{Remark}
\theoremstyle{definition}

\title[Morita equivalent subalgebras]{Morita equivalent subalgebras of irrational 
rotation algebras and real quadratic fields}
\author{Norio Nawata}
\begin{document}
\maketitle
\begin{abstract}
In this paper, we determine the isomorphic classes of Morita equivalent subalgebras of 
irrational rotation algebras. It is based on the solution of the quadratic Diophantine 
equations. We determine the irrational rotation algebras that have locally trivial 
inclusions. We compute the index of the locally trivial inclusions of 
irrational rotation algebras.
\ \\
\ \\
Key words: Irrational rotation algebras, Morita equivalence, $C^*$-index theory, 
Real quadratic fields.
\ \\
Mathmatics Subject Classifications (2000): Primary 46L05, Secondary 11D09, 11R11.
\end{abstract}
\section{Introduction}
Let $\theta$ be an irrational number. An irrational rotation algebra $A_\theta$ is the 
universal $C^*$-algebra generated by two unitaries $u$, $v$, with the relation 
$uv=e^{2\pi i\theta}vu$. It is simple and has a unique normalized trace. 
They were classified up to $C^*$-isomorphism and Morita equivalence \cite{Pim1},\cite{Rie1}.

$C^*$-index theory in \cite{Wat} is a $C^*$-algebraic version of index theory for subfactors 
by V. F. R. Jones \cite{Jon}. Let $N\subseteq M$ be $II_1$-factors. If $M$ is a hyperfinite 
factor and Jones index $[M:N]$ is finite, then $N$ is also a hyperfinite factor. Hence, $N$ 
is isomorphic to $M$ as a von Neumann algebra. In $C^*$-index theory, there exist many 
non-isomorphic subalgebras that are of finite index. We need to consider isomorphic classes
of subalgebras. K. Kodaka studied endomorphisms of certain irrational rotation algebras 
in \cite{kod}. Since $A_\theta$ is simple, the ranges of endomorphisms are isomorphic 
subalgebras of $A_\theta$. In this paper, we extend his results and study the 
$C^*$-subalgebras of $A_\theta$, that are Morita equivalent to $A_\theta$. 

Throughout the paper, we assume that a subalgebra has a common unit. We shall sketch 
the content of each section in this paper.

In section \ref{sec:Morita}, we determine the isomorphic classes of Morita equivalent 
subalgebras of irrational rotation algebras. For example, Morita equivalent subalgebras of 
$A_{\frac{5+\sqrt{5}}{10}}$ are isomorphic to $A_{\frac{5+\sqrt{5}}{10}}$ 
or $A_{\frac{5+\sqrt{5}}{2}}$. Morita equivalent subalgebras of $A_{\frac{3+\sqrt{3}}{6}}$ 
are isomorphic to $A_{\frac{3+\sqrt{3}}{6}}$, $A_{\frac{3+\sqrt{3}}{3}}$, 
$A_{\frac{3+\sqrt{3}}{2}}$ or $A_{\sqrt{3}}$. It is based on the solution of the 
quadratic Diophantine equations. The isomorphic classes of Morita equivalent subalgebras 
of irrational rotation algebras are related to arithmetic properties of real quadratic fields. 
We show that a part of the decomposition of prime ideals in real quadratic fields is 
connected with the isomorphic classes of Morita equivalent subalgebras of irrational rotation 
algebras. We expect that there exists a connection with the real multiplication program by 
Y. Manin \cite{Man}.

In section \ref{sec:Cer}, we determine the irrational rotation algebras that have locally 
trivial inclusions, where an inclusion $B\subseteq A$ of $C^*$-algebras is called a 
locally trivial inclusion if there exist a projection $q$ in $A$ and an isomorphism 
$\varphi$ of $qAq$ onto $(1-q)A(1-q)$ such that $B=\{x+\varphi(x)\in A; x\in qAq \}$. 
If $A$ is simple, then $qAq$ and $A$ are Morita equivalent. Hence, a locally trivial inclusion 
is a construction of a Morita equivalent subalgebra.

In section \ref{sec:ind}, we show that the index of the locally trivial inclusions of 
irrational rotation algebras are four, which is the same value as the case of 
subfactors(if we consider the minimal index due to F. Hiai \cite{hia},).

\vspace{3mm}
\textbf{Acknowledgements.} 
The author wishes to express his deep gratitude to Professor Y. Watatani for many helpful 
suggestions and guidance and to Professor K. Kodaka for helpful comments. The author also 
thanks many people in Kyushu university for helpful discussion.

\section{Morita equivalent subalgebras}
\label{sec:Morita}
Let $\tau_\theta$ denote a unique normalized trace of $A_\theta$. The notion $A\cong B$ means 
that $A$ is isomorphic to $B$ as a $C^*$-algebra. Let $M_k(A_\theta)$ denote the algebra of 
all $k\times k$ matrices over $A_\theta$. We denote by $\tau_{k,\theta}$ the unique normalized 
trace on $M_k(A_\theta)$. We refer the reader to B. Blackadar \cite{Bla} and K. Davidson 
\cite{Dav} for the basic properties of $C^*$-algebras.

First, we consider the condition of $k\in\Bbb{N}$ and $\eta\in\Bbb{R}-\Bbb{Q}$ such that 
$M_k(A_\eta)$ is isomorphic to a subalgebra of $A_\theta$.
\begin{lem}
\label{pro:1.1}
If $M_k(A_\eta)$ is isomorphic to a subalgebra $B$ of $A_\theta$ with a common unit, 
then  there exists a natural number $n$ such that $M_k(A_\eta)\cong A_{n\theta}$.
\end{lem}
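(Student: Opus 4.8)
The plan is to read off the trace range of $M_k(A_\eta)$ from its embedding in $A_\theta$, to identify it as a subgroup of $\mathbb{Z}+\mathbb{Z}\theta$ of a special form, and then to invoke the classification of (matrix algebras over) irrational rotation algebras by the range of the trace and the class of the unit.

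First I would restrict the trace. Since $B$ and $A_\theta$ share the unit, $\tau_\theta|_B$ is a trace on $B$ with $\tau_\theta(1_B)=1$, so it is normalized; as $B\cong M_k(A_\eta)$ has a unique normalized trace, $\tau_\theta|_B$ coincides with it. Passing to $K$-theory, the inclusion $\iota\colon B\hookrightarrow A_\theta$ gives $\tau_\theta\circ\iota_*=\tau_{k,\eta}$ on $K_0(B)$. By \cite{Pim1} the range of $\tau_\theta$ on $K_0(A_\theta)$ is $\mathbb{Z}+\mathbb{Z}\theta$, while the range of $\tau_{k,\eta}$ on $K_0(M_k(A_\eta))$ is $\frac{1}{k}(\mathbb{Z}+\mathbb{Z}\eta)$. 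Since the latter equals the range of $\tau_\theta\circ\iota_*$, which is contained in the range of $\tau_\theta$, I obtain a containment of subgroups of $\mathbb{R}$,
\[
\tfrac{1}{k}(\mathbb{Z}+\mathbb{Z}\eta)\subseteq\mathbb{Z}+\mathbb{Z}\theta .
\]

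Next I would extract $n$ from this containment. Put $G=\mathbb{Z}+\mathbb{Z}\theta$ and $H=\frac{1}{k}(\mathbb{Z}+\mathbb{Z}\eta)$. As $1=\tau_\theta(1_B)\in H$ we have $\mathbb{Z}\subseteq H\subseteq G$, so $H/\mathbb{Z}$ is a subgroup of $G/\mathbb{Z}\cong\mathbb{Z}$, hence of the form $n\mathbb{Z}$ for a unique $n\in\mathbb{N}$; moreover $n\ge 1$ because $H$ has rank two (as $\eta\notin\mathbb{Q}$). Therefore $H=\mathbb{Z}+\mathbb{Z}(n\theta)$, so the range of the trace on $M_k(A_\eta)$ equals $\mathbb{Z}+\mathbb{Z}(n\theta)$, the range of the trace on $A_{n\theta}$, and the class of the unit sits at $1$ in both.

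Finally I would upgrade this to an isomorphism. Because $A_\theta$ is simple with a unique trace, the order on $K_0$ of each of $M_k(A_\eta)$ and $A_{n\theta}$ is determined by the trace, so the triples $(K_0,K_0^+,[1])$ are both identified with $\bigl(\mathbb{Z}+\mathbb{Z}(n\theta),\,\{x:x>0\}\cup\{0\},\,1\bigr)$, while $K_1\cong\mathbb{Z}^2$ for both. By the classification of irrational rotation algebras and their matrix algebras in terms of the range of the trace and the class of the unit \cite{Pim1},\cite{Rie1},\cite{Bla}, these matching invariants force $M_k(A_\eta)\cong A_{n\theta}$. I expect this last step to be the main obstacle: the first two steps are routine subgroup bookkeeping, but here one must know that the chosen invariant is complete for this class of algebras and that simplicity together with uniqueness of the trace really does pin down the order on $K_0$.
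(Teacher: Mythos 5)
Your proof is correct, and the first two steps are really just a $K_0$-packaged version of what the paper does by hand: the paper picks explicit projections of trace $\eta+l$, $\frac{\eta+l}{k}$ and $\frac{1-\eta-l}{k}$ in $M_k(A_\eta)$, pushes them into $A_\theta$, and reads off that $\frac{m_1}{k},\frac{1-m_1}{k}\in\Bbb{Z}$, hence $k=1$ and $\eta\equiv\pm n\theta \pmod{\Bbb{Z}}$. Where you genuinely diverge is the last step: the paper never needs to classify \emph{matrix algebras} over irrational rotation algebras, only the algebras $A_\theta$ themselves (Pimsner--Voiculescu/Rieffel), because it has already forced $k=1$ elementarily. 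You instead keep $k$ general and invoke completeness of $(K_0,K_0^+,[1],K_1)$ for the whole class $\{M_k(A_\eta)\}$; that statement is true, but it is not in \cite{Pim1} or \cite{Rie1} --- it rests on the Elliott--Evans AT-structure theorem plus Elliott's classification of simple real rank zero AT-algebras, a far heavier input than anything the paper uses (it is at least surveyed in \cite{Bla}, so the citation is not wrong, just misleadingly attributed). You identified this as the main obstacle, and you can remove it entirely with one line you already have in hand: since $1_B$ has trace $1$ and the unit of $M_k(A_\eta)$ is a sum of $k$ equivalent projections, $\frac{1}{k}$ lies in your group $H=\Bbb{Z}+\Bbb{Z}(n\theta)$, and irrationality of $\theta$ forces $\frac{1}{k}\in\Bbb{Z}$, i.e.\ $k=1$; then $\Bbb{Z}+\Bbb{Z}\eta=\Bbb{Z}+\Bbb{Z}(n\theta)$ gives $\eta=a\pm n\theta$ and the classical classification of the $A_\theta$ finishes the argument. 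With that substitution your route and the paper's are essentially the same proof; as written, yours trades elementary bookkeeping for a deep classification theorem.
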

\begin{proof}
Let $\varphi :M_k(A_\eta)\rightarrow B$ be an isomorphism. Since the trace is unique and a 
subalgebra has a common unit,  
\[\tau_\theta(\varphi(x))=\tau_{k,\eta}(x) \qquad x\in M_k(A_\eta).\]
By \cite{Rie1}(Proposition 1.3), there exist an integer $l$ and a projection $q_1$ in 
$M_k(A_\eta)$ such that $\tau_{k,\eta}(q_1)=\eta+l$. Since $\varphi(q_1)$ is a projection 
in $A_\theta$,
\[\eta+l\in(\Bbb{Z}+\Bbb{Z}\theta)\cap[0,1].\]
There exist integers $m_0$,$m_1$ such that $\eta+l=m_0\theta+m_1$. Hence,
\[A_\eta\cong A_{\eta+l}\cong A_{m_0\theta+m_1} \cong A_{m_0\theta}\cong A_{|m_0|\theta}.\] 
Define $n:=|m_0|$.

Let $q_2$ and $q_3$ projections in $M_k(A_\eta)$ such that 
$\tau_{k,\eta}(q_2)=\frac{\eta+l}{k}=\frac{m_0\theta+m_1}{k}$ and 
$\tau_{k,\eta}(q_3)=\frac{1-\eta-l}{k}=\frac{1-m_0\theta-m_1}{k}$.

Since $\varphi(q_2)$ and $\varphi(q_3)$ are projections in $A_\theta$,
\[\frac{m_0\theta+m_1}{k} , \frac{1-m_0\theta-m_1}{k}\in(\Bbb{Z}+\Bbb{Z}\theta)\cap[0,1].\]
Hence, $\frac{m_1}{k}$ and $\frac{1-m_1}{k}$ are integers.
This implies $k=1$. Therefore, $M_k(A_\eta)\cong A_{n\theta}$.
\end{proof}
We shall consider Morita equivalent subalgebra of $A_\theta$. Let $GL(2, \Bbb{Z})$ 
denote the group of $2\times 2$ matrices with entires in $\Bbb{Z}$ and with determinant 
$\pm1$, and let $GL(2,\Bbb{Z})$ act on the set of irrational numbers by
\[\left(
    \begin{array}{cc}
     a & b \\
     c & d 
    \end{array}
    \right)\theta=\frac{a\theta+b}{c\theta+d}. \]
\begin{pro}\label{pro:n1.1}
A $C^*$-algebra $B$ is isomorphic to a subalgebra of $A_\theta$ with a common unit and is 
Morita equivalent to $A_\theta$ if and only if there exists $n\in\Bbb{N}$ and 
$g\in GL(2, \Bbb{Z})$ such that $n\theta=g\theta$ and $B\cong A_{n\theta}$. 
\end{pro}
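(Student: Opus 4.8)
The plan is to prove the two implications separately, the tools being Rieffel's classification of Morita equivalence for irrational rotation algebras (\cite{Rie1}), the structure theory of finitely generated projective modules over $A_\theta$, and Lemma \ref{pro:1.1}. The underlying idea is that Morita equivalence pins down $B$ up to the form $M_k(A_\eta)$, after which Lemma \ref{pro:1.1} collapses it to a genuine rotation algebra $A_{n\theta}$, and Rieffel's orbit criterion translates ``Morita equivalent'' into the arithmetic condition $n\theta=g\theta$.

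For the ``if'' direction, suppose $n\theta=g\theta$ for some $g\in GL(2,\Bbb{Z})$ and $B\cong A_{n\theta}$. Since $n\theta$ is irrational and lies in the $GL(2,\Bbb{Z})$-orbit of $\theta$, Rieffel's theorem gives that $A_{n\theta}$ is Morita equivalent to $A_\theta$, hence so is $B$. It then remains only to realize $A_{n\theta}$ as a unital subalgebra of $A_\theta$, and I would do this explicitly: if $u,v$ are the canonical unitaries generating $A_\theta$, a direct computation gives $uv^n=e^{2\pi i n\theta}v^n u$, so by the universal property there is a unital $*$-homomorphism $A_{n\theta}\to A_\theta$ sending the generators to $u$ and $v^n$. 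Being unital it is nonzero, and since $A_{n\theta}$ is simple it is injective; its image is therefore a unital copy of $A_{n\theta}$ in $A_\theta$, giving the claim.

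For the ``only if'' direction, suppose $B$ is isomorphic to a unital subalgebra of $A_\theta$ and is Morita equivalent to $A_\theta$. The first step is to identify the isomorphism type of $B$: because $B$ is a unital $C^*$-algebra strongly Morita equivalent to $A_\theta$, it is the endomorphism algebra of a full finitely generated projective module, hence isomorphic to a corner $pM_k(A_\theta)p$, and by Rieffel's description of such modules and their endomorphism algebras (\cite{Rie1}) this corner is isomorphic to $M_k(A_\eta)$ for some $k\in\Bbb{N}$ and some irrational $\eta$ in the $GL(2,\Bbb{Z})$-orbit of $\theta$. I expect this identification to be the main obstacle, since it is the one step that genuinely uses the module theory of $A_\theta$ rather than formal manipulations. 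Granting it, $M_k(A_\eta)\cong B$ is isomorphic to a unital subalgebra of $A_\theta$, so Lemma \ref{pro:1.1} produces $n\in\Bbb{N}$ with $M_k(A_\eta)\cong A_{n\theta}$, and in particular $B\cong A_{n\theta}$. Finally, since $B\cong A_{n\theta}$ is Morita equivalent to $A_\theta$, Rieffel's orbit criterion forces $n\theta$ and $\theta$ to lie in the same $GL(2,\Bbb{Z})$-orbit, i.e. there is $g\in GL(2,\Bbb{Z})$ with $n\theta=g\theta$. This is exactly the required conclusion, so the two implications together finish the proof.
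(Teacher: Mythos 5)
Your proof is correct and follows essentially the same route as the paper: Morita equivalence plus unitality identifies $B$ with some $M_k(A_\eta)$ for $\eta$ in the $GL(2,\Bbb{Z})$-orbit of $\theta$ (the paper cites \cite{Rie2}, Corollary 2.6 for exactly this), Lemma \ref{pro:1.1} then collapses it to $A_{n\theta}$, and the converse is realized by the unital copy of $A_{n\theta}$ generated by a power of one canonical unitary together with the other. The only cosmetic difference is in how $g$ is extracted at the end: you apply Rieffel's Morita-equivalence orbit criterion directly to the pair $A_{n\theta}$, $A_\theta$, while the paper passes through the isomorphism $A_{n\theta}\cong A_{g_0\theta}$ and builds $g$ explicitly from $g_0$; both are equally valid.
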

\begin{proof}
We assume that $B$ is isomorphic to a subalgebra of $A_\theta$ with a common unit and 
is Morita equivalent to $A_\theta$. Since $B$ is Morita equivalent to $A_\theta$ and 
has a unit, there exist $k\in\Bbb{N}$ and $g_0\in GL(2,\Bbb{Z})$ such that 
$B\cong M_k(A_{g_0\theta})$ by \cite{Rie2}(Corollary 2.6). Hence, there exists a natural 
number $n$ such that $B\cong A_{n\theta}$ by Lemma \ref{pro:1.1}. Therefore, $k=1$ and 
$A_{n\theta}\cong A_{g_0\theta}$. There exists an integer $l$ such that $n\theta=g_0\theta+l$ 
or $n\theta=-g_0\theta+l$ by \cite{Rie1}(Thorem 2). Let $g_1, g_2\in GL(2,\Bbb{Z})$ be 
\[g_1=\left(
    \begin{array}{cc}
     1 & l \\
     0 & 1 
    \end{array}
    \right),\quad
    g_2=\left(
    \begin{array}{cc}
    -1 & 0 \\
     0 & 1 
    \end{array}
    \right).\]
Define $g:=g_1g_0$ or $g:=g_1g_2g_0$. Then $g\in GL(2, \Bbb{Z})$ and $n\theta=g\theta$. 
Consequently, there exist $n\in\Bbb{N}$ and $g\in GL(2, \Bbb{Z})$ such that $n\theta=g\theta$ 
and $B\cong A_{n\theta}$.

\vspace{3mm}
Conversely, we assume that there exist $n\in\Bbb{N}$ and $g\in GL(2, \Bbb{Z})$ such that 
$n\theta=g\theta$ and $B\cong A_{n\theta}$. We consider a subalgebra generated by $u^n$ and 
$v$. Since $u^nv=e^{2\pi in\theta}vu^n$, it is isomorphic to $A_{n\theta}$. Since 
$n\theta=g\theta$, it is Morita equivalent to $A_\theta$ by \cite{Rie1}(Theorem 4). 
Consequently, $B$ is isomorphic to a subalgebra of $A_\theta$ and is Morita equivalent to 
$A_\theta$.
\end{proof}
We study the isomorphic classes of subalgebras of $A_\theta$. We need the following well 
known fact.
\begin{lem}\label{pro:1.2}
Let $n_0$, $n_1$ be natural numbers. If $A_{n_0\theta}$ is isomorphic to $A_{n_1\theta}$, 
then $n_0=n_1$.
\end{lem}
We consider the case where $\theta$ is not a quadratic irrational number.
\begin{thm}\label{thm:1.1}
Let $\theta$ be an irrational number. Assume that $\theta$ is not a quadratic number. 
If $B$ is a subalgebra of $A_\theta$ with a common unit and is Morita equivalent to 
$A_\theta$, then $B$ is isomorphic to $A_\theta$.
\end{thm}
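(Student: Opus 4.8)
The plan is to apply Proposition \ref{pro:n1.1} and then extract a quadratic relation that $\theta$ is forbidden to satisfy. By Proposition \ref{pro:n1.1}, since $B$ is a subalgebra of $A_\theta$ with a common unit and is Morita equivalent to $A_\theta$, there exist $n\in\Bbb{N}$ and
\[g=\left(
    \begin{array}{cc}
     a & b \\
     c & d
    \end{array}
    \right)\in GL(2,\Bbb{Z})\]
such that $n\theta=g\theta$ and $B\cong A_{n\theta}$. Since $B\cong A_{n\theta}$, it suffices to prove that $n=1$.

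The crux of the argument is that the M\"obius relation $n\theta=g\theta$ is secretly a \emph{quadratic} constraint on $\theta$. First I would clear denominators in $n\theta=\frac{a\theta+b}{c\theta+d}$ to obtain
\[nc\,\theta^2+(nd-a)\theta-b=0.\]
If $nc\neq 0$, this exhibits $\theta$ as a root of a genuine quadratic polynomial with integer coefficients and nonzero leading coefficient, so $\theta$ would be a quadratic number, contradicting the hypothesis. Hence $nc=0$, and since $n\in\Bbb{N}$ gives $n\geq 1$, we conclude $c=0$.

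Once $c=0$, the determinant condition $ad=\pm 1$ with $a,d\in\Bbb{Z}$ forces $|a|=|d|=1$, so $\frac{a}{d}=\pm 1$ and $m:=\frac{b}{d}\in\Bbb{Z}$. The relation then collapses to $n\theta=\pm\theta+m$. In the case $n\theta=\theta+m$ we get $(n-1)\theta=m$, and irrationality of $\theta$ forces $n-1=0$, i.e. $n=1$; in the case $n\theta=-\theta+m$ we get $(n+1)\theta=m$ with $n+1\geq 2$, which makes an irrational number equal to an integer, a contradiction. Therefore $n=1$ and $B\cong A_{n\theta}=A_\theta$.

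There is no serious obstacle here beyond spotting that the fixed-point equation for the $GL(2,\Bbb{Z})$-action encodes a quadratic polynomial in $\theta$; the non-quadratic hypothesis is precisely what kills the leading term, and the remaining bookkeeping with the entries of $g$ and the irrationality of $\theta$ is routine.
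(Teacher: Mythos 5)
Your proof is correct and follows essentially the same route as the paper: both extract the relation $nc\theta^2+(nd-a)\theta-b=0$ from $n\theta=g\theta$, use the non-quadraticity of $\theta$ to force $c=0$, and then use the determinant condition together with irrationality to conclude $n=1$. The only cosmetic difference is that you argue directly that $n=1$ (so you never need Lemma \ref{pro:1.2}), whereas the paper argues by contradiction starting from $n\neq 1$.
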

\begin{proof}
On the contrary, we assume that $A_\theta$ had a non-isomorphic Morita equivalent subalgebra. 
Then there exist $n\in\Bbb{N}$ and $g\in GL(2, \Bbb{Z})$ such that $n\theta=g\theta$ and 
$n\neq1$ by Proposition \ref{pro:n1.1} and Lemma \ref{pro:1.2}. There exist integers 
$a$,$b$,$c$,$d$ such that $ad-bc=\pm1$ and $n\theta=\frac{a\theta+b}{c\theta+d}$. Hence, 
we have $nc\theta^2+(dn-a)\theta-b=0$.
Since $\theta$ is not a quadratic irrational number and $n$ is a natural number, 
$c=0$, $dn-a=0, b=0$. By $ad-bc=\pm1$, $ad=\pm1$. Therefore, 
$(a,d)=(1,1), (1,-1), (-1,1), (-1,-1)$. Hence, $n=\pm1$. This is a contradiction.
\end{proof}
We consider the case where $\theta$ is a quadratic irrational number. We may assume that 
$\theta$ satisfies $k\theta^2+l\theta+m=0$ with a natural number $k$ and integers $l$,$m$ 
such that $gcd(k,l,m)=1$. The equation is uniquely determined. Let $D=l^2-4km$ be the 
discriminant of $\theta$.
\begin{lem}\label{lem:1.1}
Let $\theta$ be a quadratic irrational number with $k\theta^2+l\theta+m=0$ as above. 
If $B$ is a subalgebra of $A_\theta$ with a common unit and is Morita equivalent to 
$A_\theta$, then there exists a divisor $n$ of $k$ such that $B$ is isomorphic to 
$A_{n\theta}$.
\end{lem}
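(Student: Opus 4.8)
The plan is to reduce everything to Proposition \ref{pro:n1.1} and then extract an arithmetic divisibility statement purely from the determinant condition. Since $B$ is a unital subalgebra of $A_\theta$ that is Morita equivalent to $A_\theta$, Proposition \ref{pro:n1.1} supplies $n\in\Bbb{N}$ and a matrix $g=\left(\begin{smallmatrix} a & b \\ c & d\end{smallmatrix}\right)\in GL(2,\Bbb{Z})$ with $ad-bc=\pm1$, $n\theta=g\theta$, and $B\cong A_{n\theta}$. So it suffices to prove that $n$ divides $k$.

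First I would clear denominators in $n\theta=\frac{a\theta+b}{c\theta+d}$ to obtain the relation $nc\,\theta^2+(nd-a)\theta-b=0$ satisfied by $\theta$. Because $\theta$ is a quadratic irrational whose primitive defining equation $k\theta^2+l\theta+m=0$, with $\gcd(k,l,m)=1$ and $k>0$, is unique up to scaling, the $\Bbb{Q}$-linear relations among $\{\theta^2,\theta,1\}$ form a one-dimensional space spanned by $(k,l,m)$. Hence $(nc,\,nd-a,\,-b)$ is a rational multiple of $(k,l,m)$, and primitivity of $(k,l,m)$ forces that multiplier to be an \emph{integer} $\lambda$. This yields the three equations $nc=\lambda k$, $nd-a=\lambda l$, and $-b=\lambda m$.

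The key step is to feed these into the determinant relation. Substituting $a=nd-\lambda l$ and $b=-\lambda m$ into $ad-bc=\pm1$ gives $nd^2-\lambda(ld-mc)=\pm1$. Multiplying through by $k$ and then replacing $\lambda k$ by $nc$ collapses the left-hand side into a single multiple of $n$, namely $n\,(kd^2-lcd+mc^2)=\pm k$. Since $kd^2-lcd+mc^2$ is an integer and $k>0$, this shows $n\mid k$, whence $B\cong A_{n\theta}$ with $n$ a divisor of $k$, as claimed.

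I expect the only delicate points to be the integrality of $\lambda$ and the degenerate case $nc=0$, both of which are handled uniformly by the above: when $nc=0$ one necessarily has $\lambda=0$, so $nd-a=0$ and $b=0$, and the determinant identity degenerates to $nd^2=\pm1$, forcing $n=1\mid k$. The genuinely new ingredient over Theorem \ref{thm:1.1} is that the normalization $\gcd(k,l,m)=1$ is exactly what converts the determinant value $\pm1$ into divisibility by $k$, rather than into a weaker bound.
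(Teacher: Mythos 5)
Your proof is correct and follows essentially the same route as the paper: both reduce to Proposition \ref{pro:n1.1}, read off $a$ and $b$ from the relation $nc\theta^2+(nd-a)\theta-b=0$ (the paper by substituting $\theta=\frac{-l\pm\sqrt{D}}{2k}$ and comparing rational and irrational parts, you by invoking the one-dimensionality of the relation space, which is the same fact), and then substitute into $ad-bc=\pm1$ to get $n(kd^2-lcd+mc^2)=\pm k$. The only cosmetic difference is that you establish integrality of $\lambda$ via primitivity, which is true but not actually needed for the final divisibility, since the identity $\lambda k=nc$ already suffices to clear denominators.
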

\begin{proof}
By Proposition \ref{pro:n1.1}, there exist $n\in\Bbb{N}$ and  $g\in GL(2, \Bbb{Z})$ such that 
$n\theta=g\theta$ and $B\cong A_{n\theta}$. There exist integers $a$,$b$,$c$,$d$ such that 
$ad-bc=\pm1$ and $n\theta=\frac{a\theta+b}{c\theta+d}$.
Hence, we have $nc\theta^2+(dn-a)\theta-b=0$.\ \\
In the case $\theta=\frac{-l+\sqrt{D}}{2k}$,
\[\frac{nc(l^2-2l\sqrt{D}+D)}{4k^2}+\frac{(dn-a)(-l+\sqrt{D})}{2k}-b=0. \]
\[(\frac{nc(l^2+D)-2kdnl+2kal}{4k^2}-b)+(\frac{-2lnc+2kdn-2ka}{4k^2})\sqrt{D}=0 .\]
Since $\sqrt{D}$ is an irrational number,
\[a=\frac{n}{k}(kd-lc), \]
\[b=\frac{nc(l^2+D)-2kdnl+2kal}{4k^2} .\]
Then
\[b=\frac{nc(l^2+D)-2kdnl+2nl(kd-lc)}{4k^2}=\frac{nc(D-l^2)}{4k^2}=-\frac{n}{k}mc .\]
Since $ad-bc=\pm1$,
\[\frac{n}{k}(kd^2-ldc)-(-\frac{n}{k}mc^2)=\pm1 ,\]
\[kd^2-ldc+mc^2=\pm\frac{k}{n} .\]
Consequently,
\[\frac{k}{n}\in \Bbb{Z} .\]
In the case $\theta=\frac{-l-\sqrt{D}}{2k}$, it is proved in the same way.
\end{proof}
\begin{thm}\label{lem:1.2}
Let $\theta$ be a quadratic irrational number with $k\theta^2+l\theta+m=0$ as above. 
Assume that $n$ is a divisor of $k$ and $k=\alpha n$ for a natural number $\alpha$. 
Then there exists a subalgebra $B$ of $A_\theta$ with a common unit such that $B$ is 
isomorphic to $A_{n\theta}$ and is Morita equivalent to $A_\theta$ if and only if 
either $nx^2-lxy+\alpha my^2=1$ or $nx^2-lxy+\alpha my^2=-1$ has solutions of integers $x$ 
and $y$.
\end{thm}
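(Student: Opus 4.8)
The plan is to reduce the statement, via Proposition \ref{pro:n1.1} and Lemma \ref{pro:1.2}, to a purely arithmetic equivalence and then exhibit an explicit correspondence between the relevant integer solutions. First I would note that, since $B$ is required to be isomorphic to the \emph{particular} algebra $A_{n\theta}$, Proposition \ref{pro:n1.1} tells us that such a $B$ exists if and only if there are $n'\in\Bbb{N}$ and $g\in GL(2,\Bbb{Z})$ with $n'\theta=g\theta$ and $A_{n\theta}\cong A_{n'\theta}$; by Lemma \ref{pro:1.2} the latter forces $n'=n$. Hence the whole statement is equivalent to: there exists $g=\left(\begin{smallmatrix}a&b\\ c&d\end{smallmatrix}\right)\in GL(2,\Bbb{Z})$ with $n\theta=g\theta$ if and only if $nx^2-lxy+\alpha my^2=\pm1$ is solvable in integers.

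For the direction $(\Leftarrow)$ I would start from a solution $(x_0,y_0)$ and simply write down a candidate matrix. Guided by the computation in the proof of Lemma \ref{lem:1.1}, which forces $a=\frac{n}{k}(kd-lc)$ and $b=-\frac{n}{k}mc$, the natural choice is $c=\alpha y_0$, $d=x_0$, $a=nx_0-ly_0$, $b=-my_0$. A short verification (using $k=\alpha n$) shows all entries are integers, that $ad-bc=nx_0^2-lx_0y_0+\alpha my_0^2=\pm1$, and that $nc\theta^2+(nd-a)\theta-b=0$ via $k\theta^2+l\theta+m=0$; since the determinant is nonzero we have $c\theta+d\neq0$, so $n\theta=g\theta$. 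Proposition \ref{pro:n1.1} then yields the desired subalgebra $B$.

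For the direction $(\Rightarrow)$ I would invoke the computation already done in Lemma \ref{lem:1.1}: from $n\theta=g\theta$ one gets $a=nd-\frac{lc}{\alpha}$, $b=-\frac{mc}{\alpha}$, and, from $ad-bc=\pm1$, the relation $kd^2-lcd+mc^2=\pm\frac{k}{n}=\pm\alpha$. Writing $k=\alpha n$, dividing by $\alpha$, and substituting $x=d$, $y=c/\alpha$ would collapse this into exactly $nx^2-lxy+\alpha my^2=\pm1$ — but the substitution is only legitimate once I know $\alpha\mid c$. This is the main obstacle and the one genuinely number-theoretic step. The argument I would give: because $a=nd-\frac{lc}{\alpha}$ and $b=-\frac{mc}{\alpha}$ are integers, $\alpha$ divides both $lc$ and $mc$; and since $\gcd(k,l,m)=1$ with $\alpha\mid k$ forces $\gcd(\alpha,l,m)=1$, a prime-power comparison (for each $p^e\parallel\alpha$, $p$ cannot divide both $l$ and $m$, so $p^e\mid c$) gives $\alpha\mid c$. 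With $c=\alpha y$ the form equation follows. Finally, exactly as in Lemma \ref{lem:1.1}, the case $\theta=\frac{-l-\sqrt{D}}{2k}$ is disposed of by the identical computation.
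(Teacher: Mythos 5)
Your proposal is correct and follows essentially the same route as the paper: the same reduction via Proposition \ref{pro:n1.1} and Lemma \ref{lem:1.1}, the same explicit matrix $a=nx_0-ly_0$, $b=-my_0$, $c=\alpha y_0$, $d=x_0$ for the converse, and the same divisibility step $\alpha\mid c$ for the forward direction. In fact your prime-power argument for $\alpha\mid c$ supplies a detail the paper only asserts with ``if not, it contradicts $\gcd(k,l,m)=1$.''
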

\begin{proof}
We assume that there exists a subalgebra $B$ of $A_\theta$ with a common unit such that 
$B$ is isomorphic to $A_{n\theta}$ and is Morita equivalent to $A_\theta$. Since $A_\theta$ 
and $A_{n\theta}$ are Morita equivalent, there exists a $g\in GL(2, \Bbb{Z})$ such that 
$n\theta=g\theta$ . There exist integers $a$,$b$,$c$,$d$ such that $ad-bc=\pm1$ and 
\[g=\left(
    \begin{array}{cc}
     a & b \\
     c & d 
    \end{array}
    \right). \]
In the case $\theta=\frac{-l+\sqrt{D}}{2k}$, by the proof of Lemma \ref{lem:1.1},
\[a=\frac{n}{k}(kd-lc)=\frac{1}{\alpha}(kd-lc)=nd-\frac{lc}{\alpha}\in\Bbb{Z}, \]
\[b=-\frac{n}{k}mc=-\frac{mc}{\alpha}\in\Bbb{Z} .\]
There exists an integer $t$ such that $c=t\alpha$. If not, it contradicts $gcd(k,l,m)=1$. 
\[ad-bc=\frac{n\alpha d^2-l\alpha dt}{\alpha}+\frac{m\alpha^2t^2}{\alpha}=nd^2-ldt+\alpha mt^2 .\]
Since $ad-bc=\pm1$, $(x,y)=(d,t)$ is a solution of $nx^2-lxy+\alpha my^2=1$ or 
$nx^2-lxy+\alpha my^2=-1$ .

In the case $\theta=\frac{-l-\sqrt{D}}{2k}$, it is proved in the same way.

\vspace{3mm}
Conversely, we assume that either $nx^2-lxy+\alpha my^2=1$ or $nx^2-lxy+\alpha my^2=-1$ has 
solutions of integers $x$ and $y$. Let $(d,t)$ be a solution of this equation. Define 
$a:=nd-lt$, $b:=-mt$, $c:=\alpha t$,  
\[g:=\left(
    \begin{array}{cc}
     a & b \\
     c & d 
    \end{array}
    \right) .\]
Then
\[ad-bc=nd^2-dlt+\alpha mt^2=nd^2-ldt+\alpha mt^2=\pm1 .\]
Therefore, $g\in GL(2,\Bbb{Z})$.
\[g\theta -n\theta=\frac{a\theta +b}{c\theta +d}-n\theta =\frac{a\theta +b-nc\theta^2 -dn\theta}{c\theta +d} \]
\[=\frac{(nd-lt)\theta -mt-kt\theta^2 -dn\theta}{c\theta +d}=\frac{-t(k\theta^2 +l\theta +m)}{c\theta +d}=0 .\]
Hence, $g\theta=n\theta$.
By Proposition \ref{pro:n1.1}, there exists a subalgebra $B$ of $A_\theta$ with a common unit 
such that $B$ is isomorphic to $A_{n\theta}$ and is Morita equivalent to $A_\theta$.
\end{proof}
There exists an algorithm of solving the quadratic Diophantine equations of the theorem 
above \cite{gau}. 
Hence, we can determine the isomorphic classes of Morita equivalent subalgebras of irrational 
rotation algebras. 

We shall show some examples.
\begin{ex}\label{ex:1.1}
Let $\theta$ be an algebraic integer of a real quadratic field.
If $B$ is a subalgebra of $A_\theta$ with a common unit and is Morita equivalent to 
$A_\theta$, then $B$ is isomorphic to $A_\theta$.
\end{ex}
\begin{proof}
An algebraic integer $\theta$ is a solution of monic equation. By Lemma \ref{lem:1.1}, 
$A_\theta$ does not have any non-isomorphic Morita equivalent subalgebras.
\end{proof}
\begin{ex}\label{ex:1.2}
Let $\theta=\frac{1}{\sqrt{p}}$ with $p$ is a prime number.
If $B$ is a subalgebra of $A_\theta$ with a common unit and is Morita equivalent to 
$A_\theta$, then $B$ is isomorphic to $A_\theta$ or $A_{p\theta}$.
\end{ex}
\begin{proof}
It is easy to see that $\theta$ is a solution of $p\theta^2-1=0$. By Lemma \ref{lem:1.1}, 
it is sufficient to consider only $A_{p\theta}$. An equation $px^2-y^2=-1$ has an integer 
solution $(0,1)$. Hence, by Theorem \ref{lem:1.2}, Morita equivalent subalgebras of 
$A_\theta$ are $A_\theta$ and $A_{p\theta}$.
\end{proof}
\begin{ex}\label{ex:1.5}
Let $\theta=\frac{5+\sqrt{5}}{10}$.
If $B$ is a subalgebra of $A_\theta$ with a common unit and is Morita equivalent to 
$A_\theta$, then $B$ is isomorphic to $A_\theta$ or $A_{5\theta}$.
\end{ex}
\begin{proof}
It is easy to see that $\theta$ is a solution of $5\theta^2-5\theta+1=0$. By Lemma 
\ref{lem:1.1}, it is sufficient to consider only $A_{5\theta}$. An equation 
$5x^2+5xy+y^2=1$ has an integer solution $(1,-1)$. Hence, by Theorem \ref{lem:1.2}, 
Morita equivalent subalgebras of $A_\theta$ are $A_\theta$ and $A_{5\theta}$.
\end{proof}
\begin{ex}\label{ex:1.3}
Let $\theta=\frac{3+\sqrt{3}}{6}$.
If $B$ is a subalgebra of $A_\theta$ with a common unit and is Morita equivalent to 
$A_\theta$, then $B$ is isomorphic to $A_\theta$, $A_{2\theta}$ $A_{3\theta}$ or 
$A_{6\theta}$.
\end{ex}
\begin{proof}
It is easy to see that $\theta$ is a solution of $6\theta^2-6\theta+1=0$. By Lemma 
\ref{lem:1.1}, it is sufficient to consider $A_{6\theta}$, $A_{3\theta}$, $A_{2\theta}$. 
An equation $6x^2+6xy+y^2=1$ has an integer solution $(0,1)$. An equation $3x^2+6xy+2y^2=-1$ 
has an integer solution $(1,-1)$. An equation $2x^2+6xy+3y^2=-1$ has an integer solution 
$(1,-1)$. Hence, by Theorem \ref{lem:1.2}, Morita equivalent subalgebras of $A_\theta$ are 
$A_\theta$, $A_{2\theta}$, $A_{3\theta}$, $A_{6\theta}$.
\end{proof}
\begin{ex}\label{ex:1.4}
Let $\theta=\frac{-5+\sqrt{65}}{10}$.
If $B$ is a subalgebra of $A_\theta$ with a common unit and is Morita equivalent to 
$A_\theta$, then $B$ is isomorphic to $A_\theta$.
\end{ex}
\begin{proof}
It is easy to see that $\theta$ is a solution of $5\theta^2+5\theta-2=0$. By Lemma 
\ref{lem:1.1}, it is sufficient to consider only $A_{5\theta}$. We shall show that 
$5x^2-5xy-2y^2=\pm1$ do not have any solutions of integers. We consider the left hand side of 
this equations modulo $5$, then
\[5x^2-5xy-2y^2\equiv0,2,3 \quad mod\,5 .\] 
Hence, 
\[5x^2-5xy-2y^2\not\equiv\pm1 \quad mod\,5.\] 
Therefore, this equations do not have solutions of integers. By Theorem \ref{lem:1.2}, 
$A_\theta$ does not have any non-isomorphic Morita equivalent subalgebras.
\end{proof}
We shall show the isomorphic classes of Morita equivalent subalgebras of irrational rotation 
algebras are related to arithmetic properties of real quadratic fields. We refer the reader 
to Y. Manin and A. Panchishkin \cite{ManP} for the basic properties of algebraic number theory.
\begin{cor}\label{thm:main}
Let $\theta$ be a quadratic irrational number with $p\theta^2+l\theta+m=0$. Assume that $p$ 
is a prime number and  $l$,$m$ are integers such that $gcd(p,l,m)=1$. We denote by 
$\Bbb{K}:=\Bbb{Q}(\theta)$ an algebraic field generated by $\theta$. The ring of integers of 
$\Bbb{K}$ is denoted by $\mathcal{O}_\Bbb{K}$. If $A_\theta$ has a non-isomorphic Morita 
equivalent subalgebra, then $p\mathcal{O}_\Bbb{K}$ is not a prime ideal in 
$\mathcal{O}_\Bbb{K}$, that is, $p$ splits completely or is ramified in $\Bbb{K}$.
\end{cor}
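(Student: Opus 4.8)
The plan is to reduce the statement, via the results already proved, to a single Diophantine fact and then translate that fact into the language of ideals in $\mathcal{O}_\Bbb{K}$. First I would apply Theorem \ref{lem:1.2} with $k=p$. Since $p$ is prime, its only natural-number divisors are $1$ and $p$; and by Lemma \ref{pro:1.2} a \emph{non-isomorphic} Morita equivalent subalgebra $B\cong A_{n\theta}$ forces $n\neq 1$, so necessarily $n=p$ and $\alpha=k/n=1$. Theorem \ref{lem:1.2} then tells us that the existence of such a subalgebra is equivalent to the solvability in integers of
\[ px^2-lxy+my^2=\pm1 .\]
Thus it suffices to show that solvability of this equation implies that $p$ is not inert in $\Bbb{K}=\Bbb{Q}(\theta)=\Bbb{Q}(\sqrt{D})$, where $D=l^2-4pm$.

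The key step is to recognize the binary form on the left as a field norm up to the factor $p$. Set $\omega:=\frac{-l+\sqrt{D}}{2}$. Then $\omega^2+l\omega+pm=0$, so $\omega$ is an algebraic integer of $\Bbb{K}$ with $\mathrm{Tr}(\omega)=-l$ and $N(\omega)=pm$. Writing $\bar\omega$ for the conjugate, a direct computation gives, for $x,y\in\Bbb{Z}$,
\[ N(px+y\omega)=(px+y\omega)(px+y\bar\omega)=p^2x^2-plxy+pmy^2=p\,(px^2-lxy+my^2) .\]
Hence a solution of $px^2-lxy+my^2=\pm1$ yields an element $\beta:=px+y\omega\in\mathcal{O}_\Bbb{K}$ with $|N(\beta)|=p$, and in particular $\beta\neq0$. (Note that the two possible roots $\theta=\frac{-l\pm\sqrt{D}}{2p}$ give the same field and the same form, so no case distinction on the sign of the radical is needed.)

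Finally I would pass to ideal norms. The principal ideal $(\beta)\subseteq\mathcal{O}_\Bbb{K}$ satisfies $N\bigl((\beta)\bigr)=|N(\beta)|=p$. Since the norm of any nonzero prime ideal is a rational prime power at least $2$, multiplicativity of the ideal norm forces $(\beta)$ to be a prime ideal of residue degree $1$ lying over the rational prime $p$. If $p$ were inert, the unique prime over $p$ would be $p\mathcal{O}_\Bbb{K}$ itself, of norm $p^2$ and residue degree $2$, so no ideal of norm $p$ could exist; this is a contradiction. Therefore $p\mathcal{O}_\Bbb{K}$ is not prime, which for a quadratic field means precisely that $p$ splits completely or is ramified in $\Bbb{K}$. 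I expect the only real subtlety to be the bookkeeping in the norm identity and the clean invocation of multiplicativity of ideal norms; the decomposition dichotomy ``not inert $=$ split or ramified'' is automatic in a quadratic field, and $\gcd(p,l,m)=1$ is not actually needed for this argument.
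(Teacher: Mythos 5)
Your proposal is correct and follows essentially the same route as the paper: both reduce via Theorem \ref{lem:1.2} (with $n=p$, $\alpha=1$) to the solvability of $px^2-lxy+my^2=\pm1$, and both then form the algebraic integer $\beta=px+y\cdot\frac{-l+\sqrt{D}}{2}$, whose product with its conjugate is $\pm p$, to conclude that $p\mathcal{O}_\Bbb{K}$ is not prime. The only (cosmetic) difference is at the last step, where you invoke multiplicativity of the ideal norm while the paper argues directly that neither conjugate factor lies in $p\mathcal{O}_\Bbb{K}$; your norm argument in fact cleanly justifies the paper's ``it is easy to see'' claim.
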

\begin{proof}
Since $A_\theta$ has a non-isomorphic Morita equivalent subalgebra, $A_{p\theta}$ is 
isomorphic to a Morita equivalent subalgebra of $A_\theta$ by Lemma \ref{lem:1.1}. 
By Theorem \ref{lem:1.2}, either $px^2-lxy+my^2=1$ or $px^2-lxy+my^2=-1$ has solutions of 
integers. We denote by $(d,t)$ one of them. Compute this equation,
\[(pd-\frac{l}{2}t+\frac{\sqrt{D}}{2}t)(pd-\frac{l}{2}t-\frac{\sqrt{D}}{2}t)=\pm p \]
Because $\frac{-l+\sqrt{D}}{2}$ and $\frac{-l-\sqrt{D}}{2}$ are solutions of 
$\eta^2+l\eta+pm=0$, 
\[pd-\frac{l}{2}t+\frac{\sqrt{D}}{2}t,\quad pd-\frac{l}{2}t-\frac{\sqrt{D}}{2}t\in\mathcal{O}_\Bbb{K}. \] 
It is easy to see that
\[pd-\frac{l}{2}t+\frac{\sqrt{D}}{2}t,\quad pd-\frac{l}{2}t-\frac{\sqrt{D}}{2}t\notin p\mathcal{O}_\Bbb{K}. \]
Hence, $p\mathcal{O}_\Bbb{K}$ is not a prime ideal in $\mathcal{O}_\Bbb{K}$, that is, 
$p\mathcal{O}_\Bbb{K}$ splits completely or is ramified.
\end{proof}
\section{Locally trivial inclusions}
\label{sec:Cer}
In section \ref{sec:Morita}, we consider a subalgebra generated by $u^n$, $v$ and the 
isomorphic classes of the subalgebra. In this section, we shall show there exist other 
Morita equivalent subalgebras of certain irrational rotation algebras. First, we define 
a locally trivial inclusion.
\begin{Def}
Let $B\subseteq A$ be $C^*$-algebras. An inclusion $B\subseteq A$ is called a locally trivial 
inclusion if there exist a projection $q$ in $A$ and an isomorphism $\varphi$ of $qAq$ 
onto $(1-q)A(1-q)$ such that $B=\{x+\varphi(x)\in A; x\in qAq \}$.
\end{Def}
Since $B$ is isomorphic to $qAq$, if $A$ is simple, then $B$ is Morita equivalent to $A$.

K. Kodaka determined the irrational rotation algebras that have a locally trivial inclusion 
$B\subseteq A_\theta$ with $B\cong A_\theta$ in \cite{kod}. He showed that there exists a 
projection $q$ in $A_\theta$ such that $A_\theta\cong qA_\theta q\cong (1-q)A_\theta(1-q)$ 
if and only if the discriminant of $\theta$ is five. We determine the irrational rotation 
algebras that have a locally trivial inclusion $B\subseteq A_\theta$. We do not assume that 
$B$ is isomorphic to $A_\theta$.

Let $c$, $d$ be integers with $gcd(c,d)=1$. We also assume $c\neq 0$. Let $V_\theta(d,c;k)$ be 
the standard module defined in \cite{Rie2} where $k$ is a natural number. It is a 
$M_k(A_\frac{a\theta+b}{c\theta+d})-A_\theta$-equivalence bimodule constructed in \cite{Rie2} 
for any integers $a$, $b$ such that $ad-bc=\pm1$. Since $V_\theta(d,c;k)$ is a finitely 
generated projective right $A_\theta$-module, it corresponds to a projection in some 
$M_k(A_\theta)$. We also denote it by $V_\theta(m,l;k)$. Let 
$Tr_\theta :=\tau_\theta\otimes Tr$ be the unnormalized trace on $M_k(A_\theta)$ where 
$Tr$ is the usual trace on $M_k(\Bbb{C})$. The following lemma is based on a proof in 
K. Kodaka \cite{kod}(Lemma 7).
\begin{lem}\label{lem:kod}
If $q$ is a proper projection in $A_\theta$ such that $\tau_\theta(q)=k(c\theta+d)$ where
 $k$ is a natural number and $c$, $d$ are integers such that $gcd(c,d)=1$, then
\[qA_\theta q\cong M_k(A_{\frac{a\theta +b}{c\theta +d}}) \]
for any $a, b\in \Bbb{Z}$ such that $ad-bc=\pm1$.
\end{lem}
\begin{proof}
By \cite{Rie2}(Theorem 1.4), $Tr_\theta(V_\theta(d,c;k))=k(c\theta+d)$. Let $e_1$ be a rank 
one projection in $M_k(\Bbb{C})$. Then $qA_\theta$ is isomorphic to 
$(q\otimes e_1)M_k(A_\theta)$ as a module. 
Since $Tr_\theta(q\otimes e_1)=k(c\theta+d)=Tr_\theta(V_\theta(c,d;k))$, $qA_\theta$ is 
isomorphic to $V_\theta(c,d;k)$ as a module by \cite{kod}(Lemma 6). Since $qA_\theta$ is 
the $qA_\theta q$-$A_\theta$-equivalence bimodule, 
\[qA_\theta q\cong M_k(A_{\frac{a\theta +b}{c\theta +d}}). \]
for any $a, b\in \Bbb{Z}$ such that $ad-bc=\pm1$ by \cite{Rie2}(Theorem 1.1 and Corollary 2.6).
\end{proof}
Let\ \\
$S_1=\{\frac{-K(2d-1)\pm\sqrt{K^2-4K}}{2cK};K,c,d\in\Bbb{Z},K\geq 5, gcd(c,d)=1,\frac{Kd^2-Kd+1}{c}\in \Bbb{Z} \}$,
\ \\
\ \\
$S_2=\{\frac{2-K(2d-1)-\sqrt{K^2+4}}{2cK};K,c,d\in\Bbb{Z},K\neq 0, gcd(c,d)=1,\frac{Kd^2-Kd-2d+1}{c}\in \Bbb{Z} \}$.\ \\ \ \\
We consider the condition of an irrational rotation algebra $A_\theta$ that has a locally 
trivial inclusion.
\begin{lem}\label{lem:con}
If an irrational rotation algebra $A_\theta$ has a locally trivial inclusion, then 
$\theta\in S_1\cup S_2$.
\end{lem}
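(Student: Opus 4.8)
The plan is to extract from the hypothesis a proper projection $q\in A_\theta$ together with an isomorphism $\varphi\colon qA_\theta q\to(1-q)A_\theta(1-q)$, and then to convert the existence of $\varphi$ into a Diophantine condition on $\theta$. Since $A_\theta$ has a unique normalized trace and $\varphi$ must preserve it, I first set $p:=\tau_\theta(q)$, which lies in $(\Bbb{Z}+\Bbb{Z}\theta)\cap(0,1)$; writing $p=c\theta+d$ with $c,d\in\Bbb{Z}$, the properness of $q$ forces $c\neq 0$. Using Morita invariance of $K_0$ and the scaling of the trace on a corner, the normalized-trace range of $qA_\theta q$ is $\frac1p(\Bbb{Z}+\Bbb{Z}\theta)$ and that of $(1-q)A_\theta(1-q)$ is $\frac1{1-p}(\Bbb{Z}+\Bbb{Z}\theta)$.

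First I would pin down the matrix sizes. The rational part of $\frac1p(\Bbb{Z}+\Bbb{Z}\theta)$ is $\frac1{\gcd(c,d)}\Bbb{Z}$, while that of $\frac1{1-p}(\Bbb{Z}+\Bbb{Z}\theta)$ is $\frac1{\gcd(c,1-d)}\Bbb{Z}$. Since $\varphi$ is an isomorphism these must coincide, and because $\gcd(\gcd(c,d),\gcd(c,1-d))$ divides $\gcd(d,1-d)=1$, both equal $\Bbb{Z}$; hence $\gcd(c,d)=\gcd(c,1-d)=1$. By Lemma \ref{lem:kod} this forces both corners to be genuine irrational rotation algebras $A_\eta\cong qA_\theta q$ and $A_{\eta'}\cong(1-q)A_\theta(1-q)$ with no matrix amplification, and the existence of $\varphi$ becomes equivalent to the equality of the full trace ranges $\frac1p(\Bbb{Z}+\Bbb{Z}\theta)=\frac1{1-p}(\Bbb{Z}+\Bbb{Z}\theta)$.

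The heart of the argument is to solve this lattice equation. Setting $u:=\frac{1-p}{p}$, the equality reads $uL=L$ for $L=\Bbb{Z}+\Bbb{Z}\theta$, so $u$ is a unit of the order $\{x\in\Bbb{Q}(\theta):xL\subseteq L\}$. Since $u=\pm1$ is impossible (it would force $p=\tfrac12$ or $1=0$), this already shows $\theta$ must be a quadratic irrational, and gives $N(u)=N(1-p)/N(p)=\pm1$, where $N$ is the field norm. I then introduce $K:=1/N(p)$ and expand $N(1-p)=\pm N(p)$ using the reduced equation $k\theta^2+l\theta+m=0$. The case $N(u)=+1$ collapses to the single relation $cl=k(2d-1)$; substituting this together with $K\,N(p)=1$ converts the reduced equation into $c^2K\theta^2+cK(2d-1)\theta+(Kd^2-Kd+1)=0$, whose discriminant is $c^2K(K-4)$ and whose roots are exactly the elements displayed in $S_1$. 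Positivity of the discriminant (needed so that $\theta$ be a real quadratic irrational) forces $K\geq 5$, while the integrality $\frac{Kd^2-Kd+1}{c}\in\Bbb{Z}$ together with $\gcd(c,d)=1$ drops out of requiring $k,l,m\in\Bbb{Z}$ with $\gcd(k,l,m)=1$. The case $N(u)=-1$ is handled identically, producing the quadratic with discriminant $c^2(K^2+4)$ and the root given in $S_2$, where only $K\neq 0$ is required.

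The step I expect to be the main obstacle is the bookkeeping in this last paragraph: checking that $K:=1/N(p)$ is a positive integer, matching the two norm-sign cases to $S_1$ and $S_2$ with their correct discriminants $K^2-4K$ and $K^2+4$, and verifying that the coprimality and integrality side-conditions in the definitions of $S_1$ and $S_2$ are precisely what reducedness $\gcd(k,l,m)=1$ and $\gcd(c,d)=1$ impose. By contrast, the trace preservation, the matrix-size collapse to $\gcd(c,d)=\gcd(c,1-d)=1$, and the reduction to $uL=L$ are comparatively routine once Lemma \ref{lem:kod} and the classification of irrational rotation algebras are in hand.
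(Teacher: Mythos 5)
Your route is genuinely different from the paper's: the paper converts the isomorphism of the two corners into an explicit identity $\frac{a\theta+b}{c\theta+d}=\frac{s\theta+t}{-c\theta+1-d}$ with integers $s,t$ satisfying $s(1-d)+tc=\pm1$, and obtains $K:=s+a$ as an integer by construction; you instead pass to trace ranges and reduce everything to the lattice equation $uL=L$ with $u=(1-p)/p$ and $L=\Bbb{Z}+\Bbb{Z}\theta$. The first half of your argument is fine: the rational-part comparison forcing $\gcd(c,d)=\gcd(c,1-d)=1$ (hence $k=1$, no matrix amplification) and the equivalence of $qA_\theta q\cong (1-q)A_\theta(1-q)$ with $\frac{1}{p}L=\frac{1}{1-p}L$ are both correct and arguably cleaner than the paper's bookkeeping.

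The gap is in the last step. After noting that $u$ lies in the unit group of the order $O_L=\{x: xL\subseteq L\}=\Bbb{Z}+\Bbb{Z}k\theta$, you retain only the norm condition $N(u)=\pm1$ and discard the membership $u\in O_L$. The integrality you need --- that $K=1/N(p)$ is an integer and that $\frac{Kd^2-Kd+1}{c}\in\Bbb{Z}$ --- does \emph{not} ``drop out'' of $\gcd(k,l,m)=1$, $\gcd(c,d)=1$ and $N(1-p)=\pm N(p)$. Concretely, take $11\theta^2-11\theta+2=0$, $\theta=\frac{11+\sqrt{33}}{22}$, $c=1$, $d=0$, so $p=\theta\in(0,1)$: then $\gcd(c,d)=1$, $p+\bar{p}=1$ so $N(1-p)=N(p)$ and your relation $cl=k(2d-1)$ holds, yet $N(p)=2/11$ and $K=11/2\notin\Bbb{Z}$. (Here $u=(1-p)/p=\frac{9}{2}-\frac{11}{2}\theta\notin\Bbb{Z}+11\Bbb{Z}\theta$, so no such projection exists --- but the conditions you actually derived cannot detect this.) The repair is to use the full condition $u\in O_L$: writing $Q:=kd^2-lcd+mc^2=kN(p)$, one computes $u=\frac{(kd-cl)-Q}{Q}-\frac{kc}{Q}\theta$, and membership in $\Bbb{Z}+\Bbb{Z}k\theta$ forces $Q\mid c$; then $kd^2=Q+lcd-mc^2\equiv 0 \bmod Q$, and $\gcd(Q,d)=1$ (from $Q\mid c$, $\gcd(c,d)=1$) gives $Q\mid k$, i.e.\ $K\in\Bbb{Z}$, after which $\frac{Kd^2-Kd+1}{c}=\frac{cm}{Q}\in\Bbb{Z}$ and $K\geq 5$ follows from $N(p)=p(1-p)<\frac14$. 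Without this step the argument does not reach $\theta\in S_1\cup S_2$.
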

\begin{proof}
There exists a projection $q$ such that $qA_\theta q\cong (1-q)A_\theta (1-q)$. By 
\cite{Rie1}(Proposition 1.3), there exist integers $c$, $d$ and a natural number $k$ such that 
$\tau_\theta(q)=k(c\theta +d)$ and $gcd(c,d)=1$. Since 
$qA_\theta q\cong (1-q)A_\theta (1-q)$, $c\neq 0$. By Lemma \ref{lem:kod},
\[qA_\theta q\cong M_k(A_{\frac{a\theta +b}{c\theta +d}}) \]
for any $a, b\in \Bbb{Z}$ such that $ad-bc=\pm1$. Fix $a, b\in \Bbb{Z}$ such that $ad-bc=1$. 
Since $M_k(A_{\frac{a\theta +b}{c\theta +d}})$ is isomorphic to a subalgebra of $A_\theta$ 
with a common unit, $k=1$ by Lemma \ref{pro:1.1}. By Lemma \ref{lem:kod} and 
$qA_\theta q\cong (1-q)A_\theta (1-q)$, there exist integers $s$, $t$ such that 
$s(1-d)+tc=\pm 1$ and
\[\frac{a\theta +b}{c\theta +d}=\frac{s\theta +t}{-c\theta +1-d}. \]
Compute the equation above,
\[(s+a)c\theta^2+((s+a)d+(t+b)c-a)\theta +(t+b)d-b=0. \]
(i) The case $s(1-d)+tc=1$.\ \\
By $ad-bc=1$ and $s(1-d)+tc=1$, $(t+b)c=(s+a)d-s$. Since 
$c\neq 0$, $t+b=\frac{(s+a)d-s}{c}\in \Bbb{Z}$. Hence,
\[(s+a)c\theta^2+(2(s+a)d-(s+a))\theta +\frac{(s+a)d^2-(s+a)d+1}{c}=0. \]
Since $\frac{(s+a)d-s}{c}\in \Bbb{Z}$ and $-bc=-ad+1$, 
\[\frac{(s+a)d^2-(s+a)d+1}{c}=d\frac{(s+a)d-s}{c}-b\in \Bbb{Z}. \] 
Define $K:=s+a$. Then
\[Kc\theta^2+(2Kd-K)\theta +\frac{Kd^2-Kd+1}{c}=0 \]
and
\[\frac{Kd^2-Kd+1}{c}\in \Bbb{Z}. \]
Solve the equation above, then
$\theta=\frac{-K(2d-1)\pm\sqrt{K^2-4K}}{2cK}$.
Since $0<\tau_\theta(q)<1$,
$0<c\theta +d=\frac{K\pm\sqrt{K^2-4K}}{2K}<1$.
Hence, $K\geq 5$. Consequently, $\theta\in S_1$.\ \\
\ \\
(ii) The case $s(1-d)+tc=-1$.\ \\
By $ad-bc=1$ and $s(1-d)+tc=-1$, $(t+b)c=(s+a)d-s-2$. Since 
$c\neq 0$, $t+b=\frac{(s+a)d-s-2}{c}\in \Bbb{Z}$. Hence,
\[(s+a)c\theta^2+(2(s+a)d-(s+a)-2)\theta +\frac{(s+a)d^2-(s+a)d-2d+1}{c}=0. \]
Since $\frac{(s+a)d-s-2}{c}\in \Bbb{Z}$ and $-bc=-ad+1$,
\[\frac{(s+a)d^2-(s+a)d-2d+1}{c}=d\frac{(s+a)d-s-2}{c}-b\in \Bbb{Z}. \]
Define $K:=s+a$. Then
\[Kc\theta^2+(2Kd-K-2)\theta +\frac{Kd^2-Kd-2d+1}{c}=0 \]
and
\[\frac{Kd^2-Kd-2d+1}{c}\in \Bbb{Z}. \]
Solve an equation above, then
$\theta=\frac{-K(2d-1)+2\pm\sqrt{K^2+4}}{2cK}$.
Since $0<\tau_\theta(q)<1$,
$0<\frac{K+2\pm\sqrt{K^2+4}}{2K}<1$.
Hence, 
\[\theta=\frac{-K(2d-1)+2-\sqrt{K^2+4}}{2cK}. \]
Consequently, $\theta\in S_2$.
\end{proof}
\begin{lem}\label{lem:s.1}
If $\theta\in S_1$, then $A_\theta$ has a locally trivial inclusion.
\end{lem}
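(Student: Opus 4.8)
The plan is to reverse the computation carried out in Lemma \ref{lem:con}: I will produce an explicit proper projection $q\in A_\theta$ together with an isomorphism $\varphi: qA_\theta q\to(1-q)A_\theta(1-q)$, whereupon $B:=\{x+\varphi(x)\in A_\theta; x\in qA_\theta q\}$ is the desired locally trivial inclusion. Write $\theta\in S_1$ as $\theta=\frac{-K(2d-1)\pm\sqrt{K^2-4K}}{2cK}$ with $K\geq5$, $\gcd(c,d)=1$ and $M:=\frac{Kd^2-Kd+1}{c}\in\Bbb{Z}$; equivalently, for either sign $\theta$ satisfies $Kc\theta^2+(2Kd-K)\theta+M=0$. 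A direct substitution gives $c\theta+d=\frac{K\pm\sqrt{K^2-4K}}{2K}$, which lies in $(0,1)$ precisely because $K\geq5$. Hence by \cite{Rie1}(Proposition 1.3) there is a proper projection $q$ with $\tau_\theta(q)=c\theta+d$, and since $\gcd(c,d)=1$, Lemma \ref{lem:kod} (with $k=1$) gives $qA_\theta q\cong A_{\frac{a\theta+b}{c\theta+d}}$ for any integers $a,b$ with $ad-bc=1$.

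The point is then to realize $(1-q)A_\theta(1-q)$ as the same rotation algebra. Since $\tau_\theta(1-q)=(-c)\theta+(1-d)$, I first record that $\gcd(c,1-d)=1$: a common prime $p$ would force $d\equiv1$ and hence $Kd^2-Kd+1\equiv1\pmod p$, contradicting $p\mid c\mid Kd^2-Kd+1$. Thus Lemma \ref{lem:kod} applies to $1-q$ as well. I then choose the free parameters carefully: pick $a$ with $a\equiv d^{-1}\pmod c$ (so that $ad-bc=1$ is solvable for some $b$) and simultaneously $a\equiv -K(d-1)\pmod c$. These two congruences are compatible exactly because $Kd^2-Kd+1\equiv0\pmod c$, which is the hypothesis $M\in\Bbb{Z}$; this is where the defining integrality condition of $S_1$ enters. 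Setting $s:=K-a$ and $t:=\frac{K(d-1)+a}{c}-b$ (an integer by the choice of $a$), a short computation gives $s(1-d)+tc=ad-bc=1$, so Lemma \ref{lem:kod} yields $(1-q)A_\theta(1-q)\cong A_{\frac{s\theta+t}{-c\theta+1-d}}$.

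Finally I would check that the two angles coincide by cross-multiplying: the difference $(s\theta+t)(c\theta+d)-(a\theta+b)(-c\theta+1-d)$ collapses, after using $s=K-a$ and $ad-bc=1$, to exactly $Kc\theta^2+(2Kd-K)\theta+M$, which vanishes because $\theta$ satisfies that quadratic. As both denominators $c\theta+d$ and $-c\theta+1-d$ equal the nonzero trace values $\tau_\theta(q)$ and $\tau_\theta(1-q)$, this forces $\frac{a\theta+b}{c\theta+d}=\frac{s\theta+t}{-c\theta+1-d}$ and hence $qA_\theta q\cong(1-q)A_\theta(1-q)$; composing the two isomorphisms of Lemma \ref{lem:kod} produces $\varphi$. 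I expect the main obstacle to be the simultaneous choice of $a$ and the final angle identification: both are pure bookkeeping, but they are the places where the integrality of $M$ and the quadratic relation defining $S_1$ must be converted into exact integer identities, so care is needed to keep the two applications of Lemma \ref{lem:kod} consistent.
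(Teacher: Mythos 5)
Your construction is correct and is essentially the paper's own proof: the same projection $q$ with $\tau_\theta(q)=c\theta+d$, the same choice $s=K-a$ and $t=\frac{K(d-1)+a}{c}-b$ (identical to the paper's $t=\frac{s(d-1)+1}{c}$), and the same final identification of the two angles via the quadratic relation. The only cosmetic difference is that you force integrality of $t$ by a congruence condition on $a$, whereas the paper shows it holds for any $a$ with $ad-bc=1$; the two are equivalent given $c\mid Kd^2-Kd+1$.
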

\begin{proof}
There exist integers $c$, $d$, and $K$ such that $gcd(c,d)=1$, $K\geq 5$, 
$\frac{Kd^2-Kd+1}{c}\in \Bbb{Z}$ and
$\theta=\frac{-K(2d-1)\pm\sqrt{K^2-4K}}{2cK}$.
Since $K\geq 5$,
$0<c\theta +d=\frac{K\pm\sqrt{K^2-4K}}{2K}<1$.
By \cite{Rie1}(Proposition1.3), there exists a projection $q$ in $A_\theta$ such that 
$\tau_\theta (q)=c\theta +d$. By $gcd(c,d)=1$, there exist integers $a$, $b$ such that 
$ad-bc=1$. Define $s:=K-a$ and $t:=\frac{s(d-1)+1}{c}$. We shall show that $t$ is an 
integer.
\[t=\frac{(K-a)(d-1)+1}{c}=\frac{Kd-K-ad+a+1}{c} \]
\[=\frac{Kd-K-bc-1+a+1}{c}=-b+\frac{Kd-K+a}{c} \]
Hence, we only need to show that $\frac{Kd-K+a}{c}$ is an integer. Since $gcd(c,d)=1$, 
it is sufficient to show that $\frac{d}{c}(Kd-K+a)$ is an integer.
\[\frac{d}{c}(Kd-K+a)=\frac{Kd^2-Kd+ad}{c} \]
\[=\frac{Kd^2-Kd+bc+1}{c}=b+\frac{Kd^2-Kd+1}{c} \]
Since $\frac{Kd^2-Kd+1}{c}\in \Bbb{Z}$, $\frac{d}{c}(Kd-K+a)$ is an integer. Therefore, 
$t$ is an integer. It is easy to see that $s(1-d)+tc=1$. By Lemma\ref{lem:kod},
\[qA_\theta q\cong A_\frac{a\theta +b}{c\theta +d}, \qquad (1-q)A_\theta (1-q)\cong A_\frac{s\theta +t}{-c\theta +1-d}. \] 
By computation,
\[\frac{a\theta +b}{c\theta +d}-\frac{s\theta +t}{-c\theta +1-d}=-\frac{Kc\theta^2+(2Kd-K)\theta +\frac{Kd^2-Kd+1}{c}}{(c\theta +d)(-c\theta +1-d)}=0. \]
Hence,
$\frac{a\theta +b}{c\theta +d}=\frac{s\theta +t}{-c\theta +1-d}$.
Therefore, $qA_\theta q\cong (1-q)A_\theta (1-q)$. Consequently, $A_\theta$ has a locally 
trivial inclusion.
\end{proof}
\begin{lem}\label{lem:s.2}
If $\theta\in S_2$, then $A_\theta$ has a locally trivial inclusion.
\end{lem}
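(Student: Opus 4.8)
The plan is to mirror the proof of Lemma \ref{lem:s.1}, reversing the computation of case (ii) in Lemma \ref{lem:con}. Given $\theta\in S_2$, I would first fix integers $K,c,d$ with $K\neq 0$, $\gcd(c,d)=1$, $\frac{Kd^2-Kd-2d+1}{c}\in\Bbb{Z}$ and $\theta=\frac{2-K(2d-1)-\sqrt{K^2+4}}{2cK}$. A direct simplification gives $c\theta+d=\frac{K+2-\sqrt{K^2+4}}{2K}$, and I would check that $0<c\theta+d<1$ for every nonzero $K$; unlike the $S_1$ case, no lower bound on $K$ is needed, since for $K>0$ the inequalities $K+2>\sqrt{K^2+4}$ and $2-K<\sqrt{K^2+4}$ both hold, and the signs reverse consistently for $K<0$. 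By \cite{Rie1}(Proposition 1.3) there is then a projection $q\in A_\theta$ with $\tau_\theta(q)=c\theta+d$.

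Next I would produce the isomorphism data. Choosing $a,b\in\Bbb{Z}$ with $ad-bc=1$, I set $s:=K-a$ and $t:=\frac{s(d-1)-1}{c}$, exactly the values dictated by the relation $(t+b)c=(s+a)d-s-2$ appearing in case (ii). The crux is to show $t\in\Bbb{Z}$. Rewriting $t=-b+\frac{Kd-K+a-2}{c}$ and using $\gcd(c,d)=1$, it suffices to see that $c$ divides $d(Kd-K+a-2)$; and indeed $\frac{d}{c}(Kd-K+a-2)=b+\frac{Kd^2-Kd-2d+1}{c}$, which lies in $\Bbb{Z}$ precisely by the defining condition of $S_2$. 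This is the step where the arithmetic membership condition is essential, and is the main point to get right. A short computation then gives $s(1-d)+tc=-1$, which in particular forces $\gcd(-c,1-d)=1$, so that Lemma \ref{lem:kod} applies to $1-q$ as well.

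With $k=1$ (justified as in Lemma \ref{lem:con} via Lemma \ref{pro:1.1}), Lemma \ref{lem:kod} yields $qA_\theta q\cong A_{\frac{a\theta+b}{c\theta+d}}$ from $ad-bc=1$, and $(1-q)A_\theta(1-q)\cong A_{\frac{s\theta+t}{-c\theta+1-d}}$ from $\tau_\theta(1-q)=-c\theta+(1-d)$ together with $s(1-d)-t(-c)=-1$. It then remains to verify that these two rotation numbers agree. Putting the difference over the common denominator $(c\theta+d)(-c\theta+1-d)$, I expect the numerator to collapse, after using $a+s=K$, $ad-bc=1$ and $(t+b)c=Kd-s-2$, to $-\bigl(Kc\theta^2+(2Kd-K-2)\theta+\frac{Kd^2-Kd-2d+1}{c}\bigr)$, which is $0$ because it is exactly the quadratic equation satisfied by $\theta\in S_2$. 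Hence $qA_\theta q\cong(1-q)A_\theta(1-q)$ and $A_\theta$ has a locally trivial inclusion. The argument is computationally parallel to Lemma \ref{lem:s.1}; the only genuinely new checks are the trace bound for all $K\neq 0$ and the integrality of the modified $t$, so I do not anticipate a serious obstacle beyond bookkeeping the sign changes carefully.
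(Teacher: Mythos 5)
Your proposal is correct and follows essentially the same route as the paper: the same choice of $s=K-a$ and $t=\frac{s(d-1)-1}{c}$, the same integrality argument via $\frac{d}{c}(Kd-K+a-2)=b+\frac{Kd^2-Kd-2d+1}{c}$, and the same collapse of the difference of rotation numbers to the quadratic satisfied by $\theta$. The only additions are harmless: you spell out the trace bound for all $K\neq 0$ (which the paper leaves as ``easy to see'') and make an unneeded appeal to Lemma \ref{pro:1.1} for $k=1$, which here holds by construction of $q$.
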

\begin{proof}
There exist integers $c$, $d$, and $K$ such that $gcd(c,d)=1$, $K\neq 0$, 
$\frac{Kd^2-Kd-2d+1}{c}\in \Bbb{Z}$ and
$\theta=\frac{-K(2d-1)+2-\sqrt{K^2+4}}{2cK}$.
It is easy to see that
$0<c\theta +d=\frac{K+2-\sqrt{K^2+4}}{2K}<1$.
By \cite{Rie1}(Proposition 1.3), there exists a projection $q$ in $A_\theta$ such that 
$\tau_\theta (q)=c\theta +d$. By $gcd(c,d)=1$, there exist integers $a$, $b$ such that 
$ad-bc=1$. Define $s:=K-a$ and $t:=\frac{s(d-1)-1}{c}$. We shall show that $t$ is an integer.
\[t=\frac{(K-a)(d-1)-1}{c}=\frac{Kd-K-ad+a-1}{c} \]
\[=\frac{Kd-K-bc-1+a-1}{c}=-b+\frac{Kd-K+a-2}{c} \]
Hence, we only need to show that $\frac{Kd-K+a-2}{c}$ is an integer. Since $gcd(c,d)=1$, 
it is sufficient to show that $\frac{d}{c}(Kd-K+a-2)$ is an integer.
\[\frac{d}{c}(Kd-K+a-2)=\frac{Kd^2-Kd+ad-2d}{c} \]
\[=\frac{Kd^2-Kd+bc+1-2d}{c}=b+\frac{Kd^2-Kd-2d+1}{c} \]
Since $\frac{Kd^2-Kd-2d+1}{c}\in \Bbb{Z}$, $\frac{d}{c}(Kd-K+a-2)$ is an integer. 
Therefore, $t$ is an integer. It is easy to see that $s(1-d)+tc=-1$. By Lemma \ref{lem:kod},
\[qA_\theta q\cong A_\frac{a\theta +b}{c\theta +d}, \qquad (1-q)A_\theta (1-q)\cong A_\frac{s\theta +t}{-c\theta +1-d}. \] 
By computation,
\[\frac{a\theta +b}{c\theta +d}-\frac{s\theta +t}{-c\theta +1-d}=-\frac{Kc\theta^2+(2Kd-K-2)\theta +\frac{Kd^2-Kd-2d+1}{c}}{(c\theta +d)(-c\theta +1-d)}=0. \]
Hence,
$\frac{a\theta +b}{c\theta +d}=\frac{s\theta +t}{-c\theta +1-d}$.
Therefore, $qA_\theta q\cong (1-q)A_\theta (1-q)$. Consequently, $A_\theta$ has a locally 
trivial inclusion.
\end{proof}
We shall determine the locally inclusions of irrational rotation algebra.
\begin{thm}We have the following.\ \\ \ \\
(1) Let $\theta=\frac{-K(2d-1)+\sqrt{K^2-4K}}{2cK}$ with $K,c,d\in\Bbb{Z}$, $K\geq 5$, $gcd(c,d)=1$ and $\frac{Kd^2-Kd+1}{c}\in \Bbb{Z}$. Then the irrational rotation algebra $A_\theta$ has a locally trivial inclusion $A_{K\theta}\subseteq A_\theta$.\ \\
\ \\
(2) Let $\theta=\frac{-K(2d-1)-\sqrt{K^2-4K}}{2cK}$ with $K,c,d\in\Bbb{Z}$, $K\geq 5$, $gcd(c,d)=1$ and $\frac{Kd^2-Kd+1}{c}\in \Bbb{Z}$. Then the irrational rotation algebra $A_\theta$ has a locally trivial inclusion $A_{K\theta}\subseteq A_\theta$.\ \\
\ \\
(3) Let $\theta=\frac{-K(2d-1)+2-\sqrt{K^2+4}}{2cK}$ with $K,c,d\in\Bbb{Z}$, $K\neq 0$, $gcd(c,d)=1$ and $\frac{Kd^2-Kd-2d+1}{c}\in \Bbb{Z}$. Then the irrational rotation algebra $A_\theta$ has a locally trivial inclusion $A_{K\theta}\subseteq A_\theta$.\ \\
\ \\
(4) Let $\theta$ be an irrational number and not in (1), (2) and (3). Then the irrational rotation algebra $A_\theta$ does not have any locally trivial inclusions.
\end{thm}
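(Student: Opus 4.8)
The plan is to assemble the theorem directly from the three lemmas already proved, so that the only genuinely new work is the identification of the isomorphism class of the constructed subalgebra as $A_{K\theta}$. First I would observe that the three parameter families in (1), (2) and (3) are, respectively, the ``$+$'' members of $S_1$, the ``$-$'' members of $S_1$, and the members of $S_2$, so their union is exactly $S_1\cup S_2$. Consequently Lemma \ref{lem:s.1} immediately produces a locally trivial inclusion $B\subseteq A_\theta$ whenever $\theta$ is as in (1) or (2), and Lemma \ref{lem:s.2} does the same for (3); this disposes of the existence assertions in (1)--(3). Part (4) is then just the contrapositive of Lemma \ref{lem:con}: the hypothesis ``$\theta$ not in (1), (2), (3)'' means precisely $\theta\notin S_1\cup S_2$, and Lemma \ref{lem:con} forbids a locally trivial inclusion in that case.

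What remains in (1)--(3) is to show that the subalgebra $B$ is isomorphic to $A_{K\theta}$. For this I would return to the construction inside the proofs of Lemmas \ref{lem:s.1} and \ref{lem:s.2}, where $B\cong qA_\theta q\cong A_\beta$ with $\beta=\frac{a\theta+b}{c\theta+d}$, $ad-bc=1$, and $\theta$ satisfying the quadratic relation recorded there. Using that relation to eliminate $\theta^2$, I would compute $\beta-K\theta$ and check that it equals the integer $n:=\frac{Kd-K+a}{c}$ in the $S_1$ case (respectively $n:=\frac{Kd-K+a-2}{c}$ in the $S_2$ case). The point is that both the integrality of $n$ and the two coefficient-matching identities $nc=Kd-K+a$ and $nd=b+\frac{Kd^2-Kd+1}{c}$ (with their evident $S_2$ analogues) are exactly the facts already verified inside those lemma proofs, so no fresh computation is required: substituting them into the numerator of $\beta-K\theta$ collapses it to $n(c\theta+d)$. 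Hence $\beta=K\theta+n$ with $n\in\Bbb{Z}$, and Rieffel's isomorphism theorem (\cite{Rie1}, Theorem 2) yields $A_\beta\cong A_{K\theta+n}\cong A_{K\theta}$, so $B\cong A_{K\theta}$ as claimed.

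The main obstacle I anticipate is precisely this identification step: one must read off from the two lemma proofs the correct integer $n$ realizing $\beta-K\theta\in\Bbb{Z}$, and confirm that the coefficient-matching conditions coincide with the integrality identities already established there, so that $\beta$ differs from $K\theta$ by an integer rather than by some other $GL(2,\Bbb{Z})$-translate. Everything else is bookkeeping: checking the set-theoretic equality of the three parameter families with $S_1\cup S_2$, and then invoking the existence lemmas, Lemma \ref{lem:con}, and Rieffel's classification.
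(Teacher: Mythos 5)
Your proposal is correct and follows essentially the same route as the paper: existence via Lemmas \ref{lem:s.1} and \ref{lem:s.2}, part (4) as the contrapositive of Lemma \ref{lem:con}, and the identification $B\cong A_{K\theta}$ by showing $\frac{a\theta+b}{c\theta+d}$ differs from $K\theta$ by the integer $\frac{Kd-K+a}{c}$ (resp.\ $\frac{Kd-K+a-2}{c}$) already shown to be integral in those lemma proofs. No substantive difference from the paper's argument.
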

\begin{proof}\ \\
Proof of (1). By Lemma \ref{lem:s.1}, $A_\theta$ has a locally trivial inclusion 
$A_{\frac{a\theta +b}{c\theta +d}}\subseteq A_\theta$ for any $a, b\in \Bbb{Z}$ such that 
$ad-bc=\pm1$. Fix $a, b\in \Bbb{Z}$ such that $ad-bc=1$. Define 
\[m:=\frac{Kd-K+a}{c}. \]
By the proof of Lemma \ref{lem:s.1}, we have $m\in \Bbb{Z}$. Simple computation shows that
\[\frac{a\theta +b}{c\theta +d}-m=\frac{-K(2d-1)+\sqrt{K^2-4K}}{2c}=K\theta. \]
Consequently, $A_{\frac{a\theta +b}{c\theta +d}}\cong A_{K\theta}$.
\ \\
\ \\
Proof of (2). By a similar argument of (1), it is proved.
\ \\
\ \\
Proof of (3). By Lemma \ref{lem:s.2}, $A_\theta$ has a locally trivial inclusion 
$A_{\frac{a\theta +b}{c\theta +d}}\subseteq A_\theta$ for any $a, b\in \Bbb{Z}$ such that 
$ad-bc=\pm1$. Fix $a, b\in \Bbb{Z}$ such that $ad-bc=1$. Define
\[m:=\frac{Kd-K+a-2}{c}. \]
By the proof of Lemma \ref{lem:s.2}, we have $m\in \Bbb{Z}$. Simple computation shows that
\[\frac{a\theta +b}{c\theta +d}-m=\frac{-K(2d-1)+2-\sqrt{K^2+4}}{2c}=K\theta \]
Consequently, $A_{\frac{a\theta +b}{c\theta +d}}\cong A_{K\theta}$.
\ \\
\ \\
Proof of (4). This is immediate by Lemma \ref{lem:con}.
\end{proof}
The case where K. Kodaka studied in \cite{kod} is (3) with $K=\pm1$. We shall show some 
examples.
\begin{ex}
Let $\theta$ be an algebraic integer of a real quadratic field. If the discriminant of 
$\theta$ is not five, then $A_\theta$ does not have any locally trivial inclusions.
\end{ex}
\begin{ex}
Let $\theta =\frac{5+\sqrt{5}}{10}$. Then $A_\theta$ has locally trivial inclusions 
$A_\theta\subseteq A_\theta$ and $A_{5\theta}\subseteq A_\theta$.
\end{ex}
\begin{ex}
Let $\theta =\frac{3+\sqrt{3}}{6}$. Then $A_\theta$ has a locally trivial inclusion 
$A_{6\theta}\subseteq A_\theta$.
\end{ex}

\section{The index of the locally trivial inclusions}\label{sec:ind}
In this section, we compute the index of the locally trivial inclusions of irrational rotation 
algebras. First, we review some definitions of $C^*$-index theory in \cite{Wat}.
\begin{Def}
Let $B\subseteq A$ be $C^*$-algebras with a common unit and $E:A\rightarrow B$ a 
conditional expectation. A finite family $\{(u_1, v_1),..., (u_n, v_n)\}$ is called 
quasi-basis if the following equations hold:
\[x=\sum_iu_iE(v_ix)=\sum_iE(xu_i)v_i \qquad x\in A .\]
We say that a conditional expectation $E:A\rightarrow B$ is of index-finite type if there 
exists a quasi-basis for $E$. In this case we define the index of $E$ by
\[IndexE:=\sum_iu_iv_i .\]
\end{Def}
$IndexE$ belongs to the centre of $A$ and does not depend on the choice of quasi-basis.
\begin{Def}
Assume that the centre of $A$ and the centre of $B$ are scalars. Let $\epsilon_0(A,B)$ 
denote the set of all expectations of $A$ onto $B$ of index-finite type. We define minimal 
index $[A:B]_0$ by
\[[A:B]_0:=min\{IndexF ; F\in\epsilon_0(A,B)\} .\]
\end{Def}
Let $B$ be a subalgebra of $A_\theta$ generated by $u^n$ and $v$. Then 
$[A_\theta :B]_0=n$ \cite{Wat}.

Throughout this section, we assume that $q$ is a projection in $A_\theta$ such that 
$qA_\theta q\cong (1-q)A_\theta (1-q)$. Let $\varphi$ denote an isomorphism of $qA_\theta q$ 
onto $(1-q)A_\theta (1-q)$. We may assume $\tau_\theta(q)>1/2$. Let 
$B:=\{x+\varphi(x); x\in qA_\theta q\}$.  

We define a conditional expectation $E:A_\theta\rightarrow B$ by,
\[E(x):=\frac{1}{2}(qxq+(1-q)x(1-q)+\varphi(qxq)+\varphi^{-1}((1-q)x(1-q))). \]
Then by an easy computation, we see that $E$ is a faithful conditional expectation.

The following lemma is well known.
\begin{lem}\label{lem:4.1}
Let $q_1$, $q_2$ be projections in $A_\theta$. If $\tau_\theta(q_1)\geq\tau_\theta(q_2)$, 
then there exists a unitary element $w$ in $A_\theta$ such that $q_1\geq w^*q_2w$.
\end{lem}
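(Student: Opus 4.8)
The plan is to reduce the assertion to the comparison theory of projections in $A_\theta$ and then to glue two partial isometries into a unitary. The facts I rely on are standard for irrational rotation algebras: $A_\theta$ is simple with the unique trace $\tau_\theta$, and the trace completely controls the Murray--von Neumann order on projections. Precisely, for projections $p,q\in A_\theta$ one has $p\precsim q$ whenever $\tau_\theta(p)\le\tau_\theta(q)$, and $p\sim q$ whenever $\tau_\theta(p)=\tau_\theta(q)$. This is exactly what makes the lemma ``well known'': it follows from the computation of $K_0(A_\theta)$ together with the fact that $\tau_\theta$ induces an order isomorphism of $K_0(A_\theta)$ onto $\Bbb{Z}+\Bbb{Z}\theta$, and from stable rank one / cancellation (see \cite{Bla}, \cite{Rie1}).

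First, using $\tau_\theta(q_2)\le\tau_\theta(q_1)$ and the comparison above, I would produce a subprojection $q_2'\le q_1$ with $q_2'\sim q_2$, and fix a partial isometry $v\in A_\theta$ with $v^*v=q_2$ and $vv^*=q_2'$. Next I pass to complements: since
\[\tau_\theta(1-q_2)=1-\tau_\theta(q_2)=1-\tau_\theta(q_2')=\tau_\theta(1-q_2'),\]
trace-determined equivalence yields a partial isometry $v'\in A_\theta$ with $(v')^*v'=1-q_2$ and $v'(v')^*=1-q_2'$.

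Finally I set $u:=v+v'$. Because the initial projections $q_2$ and $1-q_2$ are orthogonal, as are the final projections $q_2'$ and $1-q_2'$, a direct computation gives $u^*u=q_2+(1-q_2)=1$ and $uu^*=q_2'+(1-q_2')=1$, so $u$ is unitary; and the identities $vq_2=v$, $v'q_2=0$, $v(v')^*=0$ give $uq_2u^*=vv^*=q_2'$. Setting $w:=u^*$ then yields $w^*q_2w=q_2'\le q_1$, which is the claim. The only substantive ingredient is the comparison theory invoked in the first two steps; the gluing in the last step is routine, so that comparison step is where the real content, and the only possible obstacle, lies.
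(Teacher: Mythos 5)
Your proof is correct. Note that the paper itself gives no argument for this lemma (it is simply declared ``well known''), so there is no internal proof to compare against; what you have written is the standard justification, and it is complete: the trace determines the Murray--von Neumann order on projections of $A_\theta$ because $\tau_\theta$ induces an order isomorphism of $K_0(A_\theta)$ onto $\Bbb{Z}+\Bbb{Z}\theta$ and $A_\theta$ has cancellation, and your gluing of the two partial isometries (on $q_2\sim q_2'\le q_1$ and on the complements $1-q_2\sim 1-q_2'$) into a unitary is routine and verified correctly.
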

We shall show the key lemma.
\begin{lem}\label{lem:4.2}
There exist a natural number $n$ and a projection $q_0$ in $(1-q)A_\theta (1-q)$ and orthogonal 
projections $q_1,...,q_n$ in $qA_\theta q$ and unitary elements $w_1,...,w_n$ in $A_\theta$ 
such that $q=q_1+q_2+...+q_n$ and $q_i=w_i^*(1-q)w_i, (0<i<n)$ and $q_n=w_n^*q_0w_n$.  
\end{lem}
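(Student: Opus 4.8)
The plan is to build the decomposition greedily by peeling off unitary copies of $1-q$ from inside $q$, controlling the sizes entirely through the trace and the comparison Lemma \ref{lem:4.1}. Write $t:=\tau_\theta(q)$, so that $\tau_\theta(1-q)=1-t$ and, by the standing assumption $\tau_\theta(q)>1/2$, we have $t>1-t$, hence $t/(1-t)>1$. The key numerical quantity is how many full copies of $1-q$ fit inside $q$: set $n:=\lceil t/(1-t)\rceil$, so that $n\geq 2$ and $n-1<\frac{t}{1-t}\leq n$, equivalently $0<t-(n-1)(1-t)\leq 1-t$. This is exactly the number of pieces the statement asks for.

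Next I would carry out the peeling inductively. Put $r_0:=q$. Suppose $r_{i-1}$ is a subprojection of $q$ with $\tau_\theta(r_{i-1})=t-(i-1)(1-t)$; for $1\leq i\leq n-1$ the defining inequalities for $n$ give $\tau_\theta(r_{i-1})\geq 1-t=\tau_\theta(1-q)$, so Lemma \ref{lem:4.1} (applied with the larger projection $r_{i-1}$ and the smaller projection $1-q$) yields a unitary $w_i$ with $r_{i-1}\geq w_i^{*}(1-q)w_i$. Define $q_i:=w_i^{*}(1-q)w_i$ and $r_i:=r_{i-1}-q_i$, a projection of trace $t-i(1-t)$. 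After $n-1$ steps set $q_n:=r_{n-1}$, which by the choice of $n$ satisfies $0<\tau_\theta(q_n)=t-(n-1)(1-t)\leq 1-t=\tau_\theta(1-q)$. Now apply Lemma \ref{lem:4.1} in the reverse direction, with the larger projection $1-q$ and the smaller projection $q_n$, to obtain a unitary $w$ with $1-q\geq w^{*}q_nw$. Put $q_0:=w^{*}q_nw\leq 1-q$ and $w_n:=w^{*}$, so that $q_0$ is a projection in $(1-q)A_\theta(1-q)$ and $q_n=w_n^{*}q_0w_n$.

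It then remains to record the elementary verifications. Each $q_i$ (for $1\leq i\leq n$) is dominated by $q$, hence $q_i=qq_iq\in qA_\theta q$. The projections are mutually orthogonal: since $q_i\leq r_{i-1}$ and $r_i=r_{i-1}-q_i$ is orthogonal to $q_i$, induction gives $q_{j}\leq r_{j-1}\leq r_i$ for $j>i$, so $q_jq_i=0$. Telescoping $r_i=r_{i-1}-q_i$ from $r_0=q$ down to $r_{n-1}=q_n$ yields $q=q_1+q_2+\cdots+q_n$, and by construction $q_i=w_i^{*}(1-q)w_i$ for $0<i<n$ while $q_n=w_n^{*}q_0w_n$, which is precisely the asserted conclusion. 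The only real care needed — the main (though mild) obstacle — is the trace bookkeeping: one must check that the choice $n=\lceil t/(1-t)\rceil$ makes the first $n-1$ remainders have trace at least $\tau_\theta(1-q)$ so each peel is licensed by Lemma \ref{lem:4.1}, while the final remainder $q_n$ lands in the range $(0,1-t]$ so that the reversed comparison applies and $q_0$ is a genuine subprojection of $1-q$; this is exactly what the inequalities $n-1<\tfrac{t}{1-t}\leq n$ guarantee.
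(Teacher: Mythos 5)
Your proposal is correct and follows essentially the same route as the paper: greedily peel unitary conjugates of $1-q$ out of $q$ via the comparison lemma, with the number of steps dictated by the trace, and handle the final remainder by comparing in the reverse direction. Your $n=\lceil \tau_\theta(q)/\tau_\theta(1-q)\rceil$ coincides with the paper's $n=k+1$ for the unique $k$ with $0<\tau_\theta(q)-k\tau_\theta(1-q)<\tau_\theta(1-q)$, so the two arguments are the same up to bookkeeping.
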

\begin{proof}
Since we assume $\tau_\theta(q)>1/2$, $\tau_\theta(q)>\tau_\theta(1-q)$. It is easy to see that 
there exists a unique natural number $k$ such that 
\[0<\tau_\theta(q)-k\tau_\theta(1-q)<\tau_\theta(1-q). \]
Define $n:=k+1$.

By Lemma \ref{lem:4.1}, there exist a projection $q_1$ in $qA_\theta q$ and a unitary element 
$w_1$ in $A_\theta$ such that $q_1=w_1^*(1-q)w_1$. We shall do the same way in $(1<i<n)$. 
Since
\[\tau_\theta(1-q)<\tau_\theta(q)-(i-1)\tau_\theta(1-q)=\tau_\theta(q-q_1-...-q_{i-1}), \]
there exist a projection $q_i$ in $(q-q_1-...-q_{i-1})A_\theta (q-q_1-...-q_{i-1})$ and a 
unitary element $w_i$ in $A_\theta$ such that $q_i=w_i^*(1-q)w_i$.
Define $q_n:=q-q_1-...-q_{n-1}$. Since $\tau_\theta(1-q)>\tau_\theta(q_n)$, there exist a 
projection $q_0$ in $(1-q)A_\theta(1-q)$ and a unitary element $w_n$ such that 
$q_n=w_n^*q_0w_n$. Therefore, we obtain the conclusion.
\end{proof}
We shall construct a quasi-basis for $E$ by the lemma above.
\begin{lem}
A family $\{(u_1, u_1^*), ..., (u_{n+3}, u_{n+3}^*)\}:=$
\[\left\{
    \begin{array}{lcr}
     (\sqrt{2}\,q,\sqrt{2}\,q), (\sqrt{2}\,(1-q),\sqrt{2}\,(1-q)), (\sqrt{2}\,(1-q)w_1q,\sqrt{2}\,qw_1^*(1-q)), \\
     (\sqrt{2}\,w_1^*(1-q),\sqrt{2}\,(1-q)w_1), ....... , (\sqrt{2}\,w_{n-1}^*(1-q),\sqrt{2}\,(1-q)w_{n-1}), \\
      (\sqrt{2}\,w_n^*q_0,\sqrt{2}\,q_0w_n)
    \end{array}
    \right\} \]
is a quasi-basis for $E$.
\end{lem}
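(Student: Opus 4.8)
The plan is to verify directly that the two defining identities of a quasi-basis,
\[x = \sum_{i=1}^{n+3} u_i E(u_i^* x) = \sum_{i=1}^{n+3} E(x u_i) u_i^*, \qquad x \in A_\theta,\]
hold for every $x$. Writing $p := 1-q$, I would first record how $E$ acts on the four corners of the decomposition $x = qxq + qxp + pxq + pxp$. Since $\varphi$ maps $qA_\theta q$ into $pA_\theta p$ and $\varphi^{-1}$ maps $pA_\theta p$ into $qA_\theta q$, a short computation using $pq = 0$ shows that $E$ annihilates the two off-diagonal corners and that
\[E(qxq) = \tfrac{1}{2}(qxq + \varphi(qxq)), \qquad E(pxp) = \tfrac{1}{2}(pxp + \varphi^{-1}(pxp)).\]
The crucial structural point is that left-multiplying $E(qxq)$ by $q$ kills the $\varphi(qxq)$-summand (it lies in $pA_\theta p$), and symmetrically $p$ kills $\varphi^{-1}(pxp)$; this is exactly what makes the $\sqrt{2}$-normalisation cancel the factor $\tfrac12$ coming from $E$.

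Next I would translate the conclusions of Lemma \ref{lem:4.2} into the algebraic relations needed. From $q_i = w_i^* p w_i$ (for $0 < i < n$) one gets $w_i q_i = p w_i$ and $w_i^* p w_i = q_i$, and from $q_i \le q$ together with orthogonality of the $q_i$ one obtains the collapsing identity $p w_1 q w_1^* = w_1 q_1 w_1^* = p$; likewise $w_n^* q_0 w_n = q_n$. With these in hand I would evaluate each member of the family, and the computation splits cleanly: $u_1 = \sqrt{2}\,q$ and $u_2 = \sqrt{2}\,p$ recover the diagonal corners $qxq$ and $pxp$; the term $u_3 = \sqrt{2}\,p w_1 q$ recovers the entire corner $pxq$ in one step, because $p w_1 q w_1^* = p$ collapses the sum $q = q_1 + \cdots + q_n$; and the terms $\sqrt{2}\,w_j^* p$ $(j = 1,\dots,n-1)$ together with $\sqrt{2}\,w_n^* q_0$ produce $q_j x p$ and $q_n x p$ respectively.

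Summing the contributions then gives $qxq + pxp + pxq + (q_1 + \cdots + q_n)xp = qxq + pxp + pxq + qxp = x$, which is the first quasi-basis identity. For the second identity I would apply the first one to $x^*$ and take adjoints, using that a conditional expectation satisfies $E(a^*) = E(a)^*$; this yields $\sum_i E(x u_i) u_i^* = x$ with no further computation, so the two identities are obtained together.

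I expect the main obstacle to be the bookkeeping in the off-diagonal terms: one must check that each $u_i E(u_i^* x)$ lands in exactly one corner, with no diagonal contamination, and that the unitaries collapse correctly via the relations of Lemma \ref{lem:4.2}. In particular, $pxq$ is recovered by the single term $u_3$ through the collapse $p w_1 q w_1^* = p$, whereas $qxp$ is recovered by $n$ separate terms through $w_j^* p w_j = q_j$ (for $j < n$) and $w_n^* q_0 w_n = q_n$, which sum to $q$. Keeping track of which side each projection $q$, $p$, $q_0$ acts on, together with the orthogonality relations $q_i q_j = 0$, is where the care is required; the arithmetic with the $\sqrt{2}$ factors is routine once the corner structure is made explicit.
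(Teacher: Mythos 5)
Your proposal is correct and follows essentially the same route as the paper: a term-by-term verification that each $u_iE(u_i^*x)$ produces exactly one corner of $x$ ($qxq$, $(1-q)x(1-q)$, $(1-q)xq$, and the pieces $q_ix(1-q)$ summing to $qx(1-q)$), using the relations $q_i=w_i^*(1-q)w_i$ and $q_n=w_n^*q_0w_n$ from Lemma \ref{lem:4.2} and the fact that $q$ (resp.\ $1-q$) annihilates the range of $\varphi$ (resp.\ $\varphi^{-1}$). Your additional observation that the second quasi-basis identity follows from the first by applying it to $x^*$ and taking adjoints (valid here since $v_i=u_i^*$) is a small clarification the paper leaves implicit, but does not change the argument.
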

\begin{proof}
It is sufficient to show that
\[x=\sum_iu_iE(u_i^*x) \qquad x\in A_\theta. \]
For any $x\in A_\theta$, we see by Lemma \ref{lem:4.2},
\[qE(qx)=\frac{1}{2}qxq, \]
\[(1-q)E((1-q)x)=\frac{1}{2}(1-q)x(1-q), \]
\[(1-q)w_1qE(qw_1^*(1-q)x)=\frac{1}{2}(1-q)xq, \]
$0<i<n$
\[w_i^*(1-q)E((1-q)w_ix)=\frac{1}{2}q_ix(1-q), \]
\[w_n^*q_0E(q_0w_nx)=\frac{1}{2}q_nx(1-q). \]
Therefore,
\begin{eqnarray*}
x &=& qxq+(1-q)x(1-q)+(1-q)xq  \\
  & & {}+q_1x(1-q)+.....+q_{n-1}x(1-q)+q_nx(1-q) \\
  &=& \sqrt{2}\,qE(\sqrt{2}\,qx)+\sqrt{2}\,(1-q)E(\sqrt{2}\,(1-q)x) \\
  & & {}+\sqrt{2}\,(1-q)w_1qE(\sqrt{2}\,qw_1^*(1-q)x) \\
  & & {}+\sqrt{2}\,w_1^*(1-q)E(\sqrt{2}\,(1-q)w_1x)+..... \\
  & & {}+\sqrt{2}\,w_{n-1}^*(1-q)E(\sqrt{2}\,(1-q)w_{n-1}x) \\
  & & {}+\sqrt{2}\,w_n^*q_0E(\sqrt{2}\,q_0w_nx) \\
\end{eqnarray*}
Consequently, this family is a quasi-basis.
\end{proof}
\begin{lem}\label{cor:1.1}
\[IndexE=4. \]
\end{lem}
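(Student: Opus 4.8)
The plan is to compute the index straight from its definition. Since the quasi-basis constructed just above has the symmetric form $\{(u_i,u_i^*)\}_{i=1}^{n+3}$, we have $IndexE=\sum_i u_iu_i^*$, so first I would evaluate this sum term by term. Because $q$, $1-q$, each $q_i$, and $q_0$ are projections and each $w_i$ is a unitary, I expect every product $u_iu_i^*$ to collapse to $2$ times a projection, and the main work is simply to identify which projection.

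The two diagonal terms give $u_1u_1^*=2q$ and $u_2u_2^*=2(1-q)$ at once. The only term needing a small manipulation is the off-diagonal one, $u_3=\sqrt{2}\,(1-q)w_1q$: multiplying the defining relation $q_1=w_1^*(1-q)w_1$ from Lemma \ref{lem:4.2} on the left by $w_1$ and using $w_1w_1^*=1$ yields $(1-q)w_1=w_1q_1$, whence $u_3=\sqrt{2}\,w_1q_1$ (as $q_1\le q$) and therefore $u_3u_3^*=2w_1q_1w_1^*=2(1-q)$. For the remaining terms I would use the same pattern: each $u_i=\sqrt{2}\,w_i^*(1-q)$ with $1\le i\le n-1$ gives $u_iu_i^*=2w_i^*(1-q)w_i=2q_i$, and the final term $u_{n+3}=\sqrt{2}\,w_n^*q_0$ gives $u_{n+3}u_{n+3}^*=2w_n^*q_0w_n=2q_n$.

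Summing, the contributions of the last $n$ terms reassemble, via $q=q_1+\cdots+q_n$ from Lemma \ref{lem:4.2}, into $2\sum_{i=1}^n q_i=2q$, so altogether
\[IndexE=2q+2(1-q)+2(1-q)+2q=4.\]
I do not expect a genuine obstacle here: the content is entirely the bookkeeping above. The two points requiring care are recognizing the identity $(1-q)w_1=w_1q_1$ so that $u_3u_3^*$ collapses to $2(1-q)$ rather than an unsimplified expression, and remembering to invoke $q=q_1+\cdots+q_n$ so that the $n$ small pieces recombine into $2q$. As a consistency check I would apply the trace $\tau_\theta$, under which the sum becomes $2\tau_\theta(q)+2\tau_\theta(1-q)+2\tau_\theta(1-q)+2\tau_\theta(q)=4$, confirming that the central element $IndexE$ is the scalar $4$, independently of the value of $\tau_\theta(q)$.
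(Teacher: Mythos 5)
Your computation is correct and follows essentially the same route as the paper: evaluate $\sum_i u_iu_i^*$ term by term, observe that the off-diagonal term collapses to $2(1-q)$ via $q_1=w_1^*(1-q)w_1$, and reassemble the last $n$ terms into $2q$ using $q=q_1+\cdots+q_n$, giving $2q+2(1-q)+2(1-q)+2q=4$. The trace check at the end is a nice extra sanity check but is not needed, since the sum is already manifestly the scalar $4$.
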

\begin{proof}
\begin{eqnarray*}
IndexE &=& \sqrt{2}\,q\sqrt{2}\,q+\sqrt{2}\,(1-q)\sqrt{2}\,(1-q) \\
       & & {}+\sqrt{2}\,(1-q)w_1q\sqrt{2}\,qw_1^*(1-q) \\
       & & {}+\sqrt{2}\,w_1^*(1-q)\sqrt{2}\,(1-q)w_1+..... \\
       & & {}+\sqrt{2}\,w_{n-1}^*(1-q)\sqrt{2}\,(1-q)w_{n-1} \\
       & & {}+\sqrt{2}\,w_n^*q_0\sqrt{2}\,q_0w_n \\
       &=& 2q+2(1-q)+2(1-q)+2q_1+....+2q_n \\
       &=& 4
\end{eqnarray*}
\end{proof}
\begin{thm}\label{cor:1.2}
Let $B\subseteq A_\theta$ be a locally trivial inclusion. Then
\[[A_\theta:B]_0=4. \]
\end{thm}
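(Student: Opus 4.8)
The plan is to prove the two inequalities $[A_\theta:B]_0\le 4$ and $[A_\theta:B]_0\ge 4$ separately. The first is immediate: by Lemma~\ref{cor:1.1} the faithful conditional expectation $E$ constructed above is of index-finite type (a quasi-basis was exhibited) with $\mathrm{Index}\,E=4$, so from the definition of the minimal index $[A_\theta:B]_0\le \mathrm{Index}\,E=4$. All the work is in the lower bound, and the natural way to obtain it is to describe \emph{every} index-finite-type expectation $A_\theta\to B$ and minimize its index.

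First I would compute the relative commutant $B'\cap A_\theta$. Writing $b=x+\varphi(x)\in B$ with $x\in qA_\theta q$, one checks $qb=bq=x$, so $q\in B'\cap A_\theta$; cutting a general $z\in B'\cap A_\theta$ by $q$ and using that the corners are simple gives $qzq\in\mathbb{C}q$ and $(1-q)z(1-q)\in\mathbb{C}(1-q)$. The key point is that the off-diagonal parts vanish: if $z_{12}:=qz(1-q)\neq 0$, then, since $B'\cap A_\theta$ is an algebra containing $q$ and $1-q$, the elements $z_{12}^*z_{12}$ and $z_{12}z_{12}^*$ are scalar multiples of $q$ and $1-q$, producing a partial isometry implementing $q\sim 1-q$; but $\tau_\theta(q)=c\theta+d$ is irrational, hence $\neq\tfrac12$, so $\tau_\theta(q)\neq\tau_\theta(1-q)$ and no such equivalence exists. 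Thus $B'\cap A_\theta=\mathbb{C}q\oplus\mathbb{C}(1-q)\cong\mathbb{C}^2$.

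With the relative commutant pinned down, the index-finite-type expectations should form the one-parameter family
\[
F_\alpha(x)=\alpha\bigl(qxq+\varphi(qxq)\bigr)+(1-\alpha)\bigl((1-q)x(1-q)+\varphi^{-1}((1-q)x(1-q))\bigr),\qquad 0<\alpha<1,
\]
with $E=F_{1/2}$; this exhausts them because, by Watatani's theory \cite{Wat}, the expectations correspond to the positive invertible elements of $B'\cap A_\theta\cong\mathbb{C}^2$ normalized to be the identity on $B$. Adapting the quasi-basis of Lemma~\ref{cor:1.1} with the weights $\alpha,1-\alpha$ (and using the decomposition of Lemma~\ref{lem:4.2}), I expect $\mathrm{Index}\,F_\alpha=\tfrac1\alpha+\tfrac1{1-\alpha}$ as a scalar, whose minimum over $0<\alpha<1$ equals $4$, attained at $\alpha=\tfrac12$. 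Hence $[A_\theta:B]_0=\min_\alpha \mathrm{Index}\,F_\alpha=4$.

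The hard part will be the honest computation of $\mathrm{Index}\,F_\alpha$ for general $\alpha$: the off-diagonal part of the quasi-basis couples the two corners through the partial isometries $w_i$ and the auxiliary projections $q_0,q_i$ of Lemma~\ref{lem:4.2}, and the trace mismatch $\tau_\theta(q)\neq\tau_\theta(1-q)$ makes the weighted bookkeeping delicate; one must also verify that the $F_\alpha$ are really all the index-finite-type expectations. A cleaner route avoiding the explicit minimization is to invoke Hiai's characterization of the minimal expectation \cite{hia}: since the two minimal projections $q,1-q$ of $B'\cap A_\theta$ give trivial (index-one) corner inclusions $qBq=qA_\theta q$ and $(1-q)B(1-q)=(1-q)A_\theta(1-q)$, the minimal expectation is the symmetric one $E=F_{1/2}$ and the minimal index is $(\sqrt1+\sqrt1)^2=4$, reproving $[A_\theta:B]_0=4$.
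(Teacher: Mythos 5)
Your proposal is correct, and it takes a genuinely more explicit route than the paper. The paper's entire proof is one line: it combines Lemma~\ref{cor:1.1} ($\mathrm{Index}\,E=4$ for the symmetric expectation) with a black-box citation of Watatani \cite{Wat} (Theorem 2.12.3) to conclude that this value is already the minimal index; no relative commutant is computed and no other expectations are examined. You instead carry out the minimization by hand, and all the ingredients check out: the computation $B'\cap A_\theta=\Bbb{C}q\oplus\Bbb{C}(1-q)$ is right (the corners are simple with trivial centre, and the off-diagonal part vanishes because $\tau_\theta(q)=c\theta+d$ is irrational, hence $\neq 1/2$, so $q\not\sim 1-q$); the family $F_\alpha$ does exhaust the index-finite-type expectations via the standard parametrization by normalized positive invertibles of $B'\cap A_\theta$; and the step you flag as "the hard part" is in fact painless --- reweighting the quasi-basis of the paper's Lemma 4.5 by $1/\sqrt{\alpha}$ on the entries whose $F_\alpha$-image lands in the $q$-corner and by $1/\sqrt{1-\alpha}$ on those landing in the $(1-q)$-corner gives $\mathrm{Index}\,F_\alpha=\frac{1}{\alpha}q+\frac{1}{1-\alpha}(1-q)+\frac{1}{\alpha}(1-q)+\frac{1}{1-\alpha}(q_1+\cdots+q_n)=\frac{1}{\alpha}+\frac{1}{1-\alpha}\geq 4$, with no delicate bookkeeping from the trace mismatch. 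Your closing alternative via Hiai's minimality criterion (the corner inclusions $Bq\subseteq qA_\theta q$ and $B(1-q)\subseteq(1-q)A_\theta(1-q)$ are equalities, so the minimal index is $(\sqrt{1}+\sqrt{1})^2=4$) is in spirit exactly what the paper's citation of Watatani accomplishes. What your approach buys is a self-contained proof of the lower bound and a complete description of the expectations together with identification of the minimal one; what the paper's buys is brevity.
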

\begin{proof}
This is immediate by Lemma \ref{cor:1.1} and \cite{Wat}(Theorem 2,12.3).
\end{proof}
\begin{rem}(1) The index of the locally trivial inclusions of irrational rotation algebras is the same 
value as the case of subfactors(if we consider the minimal index due to F. Hiai \cite{hia},).
\ \\
(2) Since simple TAI $C^*$-algebras which have a unique trace have the property of Lemma \ref{lem:4.1} 
(see Corollary 4.6 and Theorem 4.7 in \cite{Lin}), we can show that the index of the locally 
trivial inclusions of simple TAI $C^*$-algebras which have a unique trace is four in the similar way.  
\end{rem}

(Norio Nawata) G{\scriptsize RADUATE} S{\scriptsize CHOOL OF} M{\scriptsize ATHEMATICS}, K{\scriptsize YUSHU} U{\scriptsize NIVERSITY}, H{\scriptsize AKOZAKI}, F{\scriptsize UKUOKA}, 812-8581, J{\scriptsize APAN}\ \\
E-$mail$ $address$: n-nawata@math.kyushu-u.ac.jp

\end{document}